\newcommand\comment[1]{}
\newcommand\classification[2][]{%
  \gdef\@classification{%
    \href{http://www.ams.org/msc/}%
{\textit{2000 Mathematics Subject Classification}} \ignorespaces#2\unskip}}
\def\NN{{\mathbb N}}
\def\G{{\mathcal G}}
\def\H{{\mathcal H}}
\def\O{{\mathcal O}}
\def\fH{{\mathfrak H}}
\def\fM{{\mathfrak M}}
\def\id{{\rm id}}
\def\cat#1{{\sf #1}}
\def\vect#1{\text{\boldmath $#1$\unboldmath}} 
\def\ie{i.\,e.}
\def\isom{\cong}
\def\congr{\equiv}
\def\LRa{\Leftrightarrow}
\DeclareMathOperator{\Ker}{Ker}
\DeclareMathOperator{\Aut}{Aut}
\DeclareMathOperator{\GL}{GL}
\DeclareMathOperator{\Gal}{Gal}
\DeclareMathOperator{\Quot}{Quot}
\DeclareMathOperator{\Spec}{Spec}
\def\uGal{\underline{\Gal}}
\def\markdef{\bf }
\theoremstyle{plain}
\newtheorem{thm}{Theorem}[section]
\newtheorem{lem}[thm]{Lemma}
\theoremstyle{definition}
\newtheorem{defn}[thm]{Definition}
\newtheorem{rem}[thm]{Remark}
\newtheorem{exam}{Example}
\newenvironment{notation}{{\bf Notation}\it}
\begin{document}

\title[Torsion group schemes]{Torsion group
  schemes as iterative differential Galois groups}
\author{Andreas Maurischat}
\address{\rm {\bf Andreas Maurischat}, Lehrstuhl A f\"ur Mathematik, RWTH Aachen University, Germany }
\email{\sf andreas.maurischat@matha.rwth-aachen.de}


\classification{12H20, 12F12, 13B05}

\keywords{Differential Galois theory, group schemes, elliptic curves}
\date{\today}

\begin{abstract}
We show that torsion group schemes of abelian varieties in positive characteristic occur as iterative differential Galois groups of extensions of iterative differential fields. The main part is to find computable criteria when higher derivations are iterative derivations, and furthermore when an iterative derivation
on the function field of an abelian variety is compatible with the addition map.
For an explicit example, we give a construction of (a family of) such iterative derivations on the function field of an elliptic curve in characteristic two.
\end{abstract}

\maketitle

\section{Introduction}

For transcendental field extensions $L/F$ the group of automorphisms of $L$ over $F$ is huge and one is
far from obtaining a Galois correspondence. By considering derivations on the fields (resp.
iterative derivations in positive characteristic), one obtains a natural subgroup of all
automorphisms, namely those automorphisms which commute with the (iterative) derivation. These
automorphisms are called (iterative) differential automorphisms. In special cases, the group of
(iterative) differential automorphisms form a linear algebraic group and one has a Galois
correspondence between Zariski-closed subgroups and intermediate differential fields.
In Picard-Vessiot theory one considers such cases. Here the extension
field $L$ is obtained as the solution field of a
linear (iterative) differential equation over the differential field $F$, quite analogous to the
classical Galois theory where the extension fields are obtained as solution fields of algebraic
equations.
By considering the automorphism group not as a group, but as a group scheme, one can deal with
nonnormal and even inseparable iterative differential extensions (see
\cite{td:tipdgtfrz} and \cite{am:gticngg}, Sect.10).
Moreover, this also applies to finite ID-extensions, and one can even obtain an infinitesimal group
scheme as ID-Galois group scheme (cf.~\cite{am:igsidgg}).

In this article, we will consider special finite group schemes, namely the torsion group schemes of
an abelian variety. More precisely, we give iterative differential field extensions having as
ID-Galois group scheme the torsion group scheme of an abelian variety.
Throughout the article we will stick to positive characteristic.

The rough idea for getting the $n$-torsion scheme
$A[n]$ of an abelian variety $A$ over a perfect field $C$ as ID-Galois group scheme is the
following.
Starting with the abelian variety $A$ over $C$ we consider the function field $L$ of $A_{C(t)}$ (i.e. of
$A$ after base change to $C(t)$) as an extension of the rational function field $C(t)$. The field
$C(t)$ comes with a standard iterative derivation with respect to $t$ (the characteristic $p$-analog
of the derivation $\frac{\partial}{\partial t}$), and this iterative derivation is then extended to
an iterative derivation on $L$. By taking care that this extension fulfills the appropriate
conditions, one guarantees that the torsion group scheme $A[n]$ indeed acts on $L$ by
ID-automorphisms. Hence by Picard-Vessiot theory, one obtains $A[n]$ as the iterative differential
Galois group of $L$ over $L^{A[n]}$, the fixed field under $A[n]$. To be more precise, one should
say that the group scheme acts by functorial automorphisms, i.e. $D$-rational points act as
ID-automorphisms on the total quotient ring $\Quot(L\otimes_C D)$.

The article is structured as follows.
In Section \ref{sec:basics}, we give the basic notation and some basic properties which will be used
in the calculations later on. Furthermore, we give a short summary of the Picard-Vessiot theory
used in this article. The theoretical considerations for obtaining the torsion group scheme of an
abelian scheme as ID-Galois group are given in Sections \ref{sec:it-der-on-ab-schemes} and
\ref{sec:galois-groups}. The main theorems are Theorem \ref{thm:commuting with rho} giving a
necessary and sufficient condition for the iterative derivation on the function field of an abelian
variety to ``commute'' with the addition map, as well as Theorem \ref{thm:torsion as galois} stating
that the torsion group schemes are the ID-Galois group schemes over an
appropriate subfield when the iterative derivation satisfies the previous conditions.

In the last sections we do explicite calculations. While Section \ref{sec:ID-extensions} deals with
the extension of an iterative derivation to an overfield in general, Section \ref{sec:example} is
dedicated to the example of an elliptic curve in characteristic $2$. In this case, we
give recursive formulas for constructing an iterative derivation on the function field which
satisfies the previously stated conditions (cf.~Theorem~\ref{thm:strong formula}).

\medskip


\section{Basic notation}\label{sec:basics}

All rings are assumed to be commutative with unit.

We will use the following notation (see also \cite{am:igsidgg}). 
A {\markdef higher derivation} (HD for short) on a ring $R$ is a homomorphism of rings $\theta:R\to
R[[T]]$, such that $\theta(r)|_{T=0}=r$ for all $r\in R$. If there is need to emphasis the extra
variable $T$ or if we use another name for the variable, we add a subscript to $\theta$, i.e.~denote
the higher derivation by $\theta_T$ (resp. $\theta_U$ if the variable is named $U$).

A higher derivation is called
an {\markdef iterative derivation} (ID for short) if for all $i,j\geq
0$, $\theta^{(i)}\circ \theta^{(j)}=\binom{i+j}{i}\theta^{(i+j)}$,
where the maps $\theta^{(i)}:R\to R$ are defined by
$\theta(r)=:\sum_{i=0}^\infty \theta^{(i)}(r)T^i$.
The pair $(R,\theta)$ is then called an {\markdef HD-ring} (resp. {\markdef ID-ring}) and
$C_R:=\{ r \in R\mid \theta(r)=r\}$ is called the {\markdef ring of
  constants } of $(R,\theta)$. 
An HD/ID-ring which is a field is called an {\markdef HD/ID-field}.
Higher derivations and iterative derivations are extended to localisations by
 $\theta(\frac{r}{s}):=\theta(r)\theta(s)^{-1}$ and to tensor products
 by 
$$\theta^{(k)}(r\otimes s)=\sum_{i+j=k} \theta^{(i)}(r)\otimes
\theta^{(j)}(s)$$ 
for all $k\geq 0$.

Given a homomorphism of rings $f:R\to S$, we often consider the $T$-linear extension of $f$ to a
homomorphism $R[[T]]\to S[[T]]$ of the power series rings. This map will be denoted by $f[[T]]$.
Given two HD-rings $(R,\theta)$ and $(S,\tilde{\theta})$. A homomorphism of rings $f:R\to S$ is
called an {\markdef HD-homomorphism} (resp. ID-homomorphism if $R$ and $S$ are ID-rings) if
$\tilde{\theta}\circ f=f[[T]]\circ \theta$.
As a special case of a homomorphism $f[[T]]$, we have the homomorphism $\theta_U[[T]]:R[[T]]\to
R[[T,U]]$ induced by the higher derivation $\theta_U:R\to R[[U]]$ on $R$.
A short calculation shows (cf. \cite{ar:icac}) that a higher derivation $\theta$ on $R$ is an
iterative derivation if and only if the following diagram commutes
\centerline{\xymatrix@+10pt{
R \ar[r]^{\theta_U} \ar[d]_{\theta_T} & R[[U]]
\ar[d]^{U\mapsto U+T} \\
R[[T]] \ar[r]^{\theta_U[[T]]} & R[[U,T]],
}}
or in other terms $\theta_U[[T]]\circ \theta_T=\theta_{T+U}$.

\begin{exam}\label{ex:ID-fields} (cf. \cite{am:gticngg})
\begin{enumerate}
\item\label{item:der by t} For any field $C$ and $F:=C(t)$, the homomorphism of $C$-algebras
$\theta:F \to F[[T]]$
given by $\theta(t):=t+T$ is an iterative derivation on $F$ with field of constants $C$. This
iterative derivation will be called the {\markdef iterative derivation with respect to~$t$}.
\item For any ring $R$, there is the {\markdef trivial} iterative derivation on $R$ given by
$\theta_0:R\to R[[T]],r\mapsto r\cdot T^0$. Obviously, the ring of constants of $(R,\theta_0)$ is
$R$ itself.
\item If $(F,\theta)$ is an HD-field and $L\geq F$ is a finite separable field extension, then
$\theta$ can be uniquely extended to a higher derivation on $L$. If the higher derivation $\theta$
is an iterative derivation, then the extension to $L$ is also an iterative derivation.
\item Let $(F,\theta)$ be an HD-field, $L/F$ a finitely generated separable field extension and
$x_1,\dots, x_k$ a separating transcendence basis of $L$ over $F$ (i.e. $F(x_1,\dots, x_k)/F$ is
purely transcendental and $L/F(x_1,\dots, x_k)$ is finite separable). Using the previous
example, it is easy to see that
any choice of elements $\xi_{i,n}\in L$ ($i=1,\dots,k$ and $n\geq 1$) defines a unique higher
derivation $\theta_L$ on $L$ extending $\theta$ and satisfying  $\theta_L(x_i)= x_i+
\sum_{n=1}^\infty \xi_{i,n} T^n$ for all $i=1,\dots, k$.
\end{enumerate}
\end{exam}
We now summarize some well known formulas for higher derivations and iterative derivations in characteristic $p>0$ which will
be used later on:

\begin{lem}\label{lem:known-stuff} \
\begin{enumerate}
\item $\theta^{(j)}(x^p)=0$ if $p$ does not divide $j$ and
$\theta^{(j)}(x^p)=\left(\theta^{(j/p)}(x)\right)^p$ if $p$ divides~$j$.
\item If $m=m_0+m_1p+\dots + m_kp^k$ and $n=n_0+n_1p+\dots + n_kp^k$ where $m_i,n_i\in \{0,\dots,
p-1\}$ then $$\binom{m}{n}\congr \binom{m_0}{n_0}\cdot \binom{m_1}{n_1}\cdots \binom{m_k}{n_k}
\mod{p}.$$
\item If $\theta$ is iterative, then $(\theta^{(j)})^p=0$ for all $j$.
\item \label{item:expansion} Let $m=m_0+m_1p+\dots + m_kp^k$ where $m_i\in \{0,\dots,
p-1\}$.  If $\theta$ is iterative, then all the $\theta^{(p^i)}$ commute with each other, and
$$\theta^{(m)}=\frac{1}{m_0!\cdot m_1!\cdots m_k!} (\theta^{(1)})^{m_0}\circ  (\theta^{(p)})^{m_1}\circ \dots \circ  (\theta^{(p^k)})^{m_k}.$$
\end{enumerate}
\end{lem}

\begin{notation}
Let $(L,\theta)$ be an HD-field of characteristic $p>0$, and $k\in \NN\cup \{\infty\}$.
We say that ``for $x\in L$ the iteration rule holds up to level $k$'' if for all $i,j\in\NN$ satisfying $i+j\leq k$ one has
$$\theta^{(i)}\circ \theta^{(j)}(x)
=\binom{i+j}{i}\theta^{(i+j)}(x),$$
or equivalently if
$$\theta_U[[T]] \bigl( \theta_T(x)\bigr)\equiv \theta_{T+U}(x) \mod (U^{k+1-j}T^j\mid 0\leq j\leq k+1).$$
We say that ``the iteration rule holds on $L$ up to level $k$'' if the iteration rule holds up to $k$ for all $x\in L$.
\end{notation}

\begin{lem}\label{lem:Things to show}
Let $(L,\theta)$ be an HD-field of characteristic $p>0$.
\begin{enumerate}
\item \label{item:subfield} For $k\in \NN\cup \{\infty\}$, the set of elements $x\in L$ for
which the iteration rule holds up to level $k$ is a subfield of $L$.
\item \label{item:level-extended} Assume that for fixed $\ell\geq 0$ the iteration rule holds on $L$ up to level $p^\ell$, then for all $0\leq k,m<p^\ell$ such that $k+m\geq p^\ell$, one has $$\theta^{(k)}\circ \theta^{(m)}=0=\binom{k+m}{k}\theta^{(k+m)}.$$
\item \label{item:cancellation} Assume that for fixed $\ell\geq 0$ the iteration rule holds on $L$ up to level $p^\ell$, and that $L$
contains an element $t$ satisfying $\theta(t)=t+T$. Then
for all $x\in L$ and all $0<r<p^\ell$ one has:
$$\theta^{(r)}\left(\sum_{m=0}^{p^\ell-1} \theta^{(m)}(x)(-t)^m\right)=0$$
\end{enumerate}
\end{lem}

\begin{proof}
(\ref{item:subfield}) The set under consideration is just the equalizer of the ring homomorphisms
$\theta_U[[T]]\circ \theta_T:L\to L[[T,U]/(U^{k+1-j}T^j\mid 0\leq j\leq k+1)$ and $\theta_{T+U}$. Hence, it is a subfield of~$L$.

(\ref{item:level-extended}) As $k,m<p^\ell$ and $k+m\geq p^\ell$, the binomial coefficient $\binom{k+m}{k}$ equals $0$ in characteristic $p$. Hence, the right hand side of the equation equals $0$. For proving that the left hand side equals zero, it is sufficient to consider the case where $k=p^j$ for some $j<\ell$, as any $\theta^{(k)}$ is  a composition of those up to a non-zero constant.
Let $m':=m+p^j-p^\ell$. By assumption on $m$ and $p^j$, we have $0\leq m'<p^j$. As $m-m'=p^\ell-p^j$ is divisible by $p^j$, $m'$ is the first part of the $p$-adic expansion of $m$ up to $p^{j-1}$ and $m-m'$ is the second part. Hence, by the previous lemma $\binom{m}{m'}=1$ in characteristic $p$. As the iteration rule holds on $L$ up to level $p^\ell$, and as $k=p^j<p^\ell$ one gets
$$\theta^{(p^j)}\circ \theta^{(m)}= \theta^{(p^j)}\circ  \theta^{(p^\ell-p^j)} \circ \theta^{(m')}=\binom{p^\ell}{p^j} \theta^{(p^\ell)}\circ \theta^{(m')}=0,$$
since $\binom{p^\ell}{p^j}=0$.

(\ref{item:cancellation}) This is a more complicated, but straightforward calculation:
\begin{eqnarray*}
&\theta^{(r)}&\left(\sum_{m=0}^{p^\ell-1} \theta^{(m)}(x)(-t)^m\right) 
\quad = \quad \sum_{m=0}^{p^\ell-1} \sum_{k=0}^r  \theta^{(r-k)}\left(\theta^{(m)}(x)\right)(-1)^m\theta^{(k)}(t^m) \\
&\stackrel{{\rm by} (\ref{item:level-extended})}{=}{}&  \sum_{k=0}^r \sum_{m=k}^{p^\ell-1-(r-k)} \binom{m+r-k}{m}\theta^{(m+r-k)}(x) \cdot (-1)^m\binom{m}{k}t^{m-k} \\
&=& \sum_{k=0}^r \sum_{m'=0}^{p^\ell-1-r} \binom{m'+r}{m'+k}\binom{m'+k}{k}  (-1)^{m'+k}\theta^{(m'+r)}(x) t^{m'} \\
&\stackrel{(\star)}{=}{}& \sum_{m'=0}^{p^\ell-1-r} \binom{m'+r}{r} \underbrace{\left( \sum_{k=0}^r \binom{r}{k} (-1)^k \right)}_{=0}  (-1)^{m'}\theta^{(m'+r)}(x) t^{m'} \\
&=& 0.
\end{eqnarray*}
Equation $(\star)$ holds, as both products $\binom{m'+r}{m'+k}\binom{m'+k}{k}$ and $ \binom{m'+r}{r}\binom{r}{k}$ count the number of possibilities of splitting a set of cardinality $m'+r$ into three disjoint subsets of cardinalities $k$, $m'$ and $r-k$ respectively.
\end{proof}

\subsection*{Picard-Vessiot theory}

We now recall some definitions from Picard-Vessiot theory. $(F,\theta)$ denotes some ID-field with
constants $C$.
\begin{defn}
Let $A=\sum_{k=0}^\infty A_k T^k\in \GL_n(F[[T]])$ be a matrix with
$A_0=\mathds{1}_n$ and for all $k,l\in\NN$, 
$\binom{k+l}{l}A_{k+l}=\sum_{i+j=l} \theta^{(i)}(A_{k})\cdot A_j.$
An equation
$$\theta(\vect{y})=A\vect{y},$$
where $\vect{y}$ is a vector of indeterminants, is called an {\markdef
  iterative differential equation (IDE)}.
\end{defn}

\begin{rem}\label{ide-condition}
The condition on the $A_{k}$ is equivalent to the condition that
$\theta^{(k)}(\theta^{(l)}(Y_{ij}))=
\binom{k+l}{k}\theta^{(k+l)}(Y_{ij})$ holds
for a matrix $Y=(Y_{ij})_{1\leq i,j\leq n}\in\GL_n(E)$ satisfying $\theta(Y)=AY$, where $E$ is some
ID-extension of
$F$. (Such a $Y$ is called a {\markdef fundamental solution matrix}) . The condition
$A_{0}=\mathds{1}_n$ is equivalent to
$\theta^{(0)}(Y_{ij})=Y_{ij}$, and already implies that the matrix $A$ is
invertible.
\end{rem}

\begin{defn}
An ID-ring $(R,\theta_R)\geq (F,\theta)$ is called a {\markdef
  Picard-Vessiot ring} (PV-ring) for the IDE $\theta(\vect{y})=A\vect{y}$, if
the following hold: 
\begin{enumerate}
\item $R$ is an ID-simple ring, i.e.~has no nontrivial $\theta_R$-stable ideals.
\item There is a fundamental solution matrix $Y\in\GL_n(R)$, \ie{}, an invertible
  matrix satisfying $\theta(Y)=AY$.
\item As an $F$-algebra, $R$ is generated by the coefficients of $Y$
  and by $\det(Y)^{-1}$.
\item $C_R=C_F=C$.
\end{enumerate}
The quotient field $E=\Quot(R)$ (which exists, since such a PV-ring
is always an integral domain) is called a {\markdef
  Picard-Vessiot field} (PV-field) for the IDE
$\theta(\vect{y})=A\vect{y}$.\footnote{The PV-rings and PV-fields defined
here were called pseudo Picard-Vessiot rings (resp. pseudo Picard-Vessiot fields)
in \cite{am:gticngg} and \cite{am:igsidgg}. This definition, however, is the
most natural generalisation 
of PV-rings and
PV-fields to non algebraically closed fields of constants.}
\end{defn}

For a PV-ring $R/F$ one defines the functor
$$\underline{\Aut}^{ID}(R/F): (\cat{Algebras} / C) \to (\cat{Groups}),
D\mapsto \Aut^{ID}(R\otimes_C D/F\otimes_C D)$$
where $D$ is equipped with the trivial iterative derivation.
In \cite{am:gticngg}, Sect.~10, it is shown that this functor is
represent\-able by a $C$-algebra of
finite type, and hence, is an affine group scheme of finite type over 
$C$. This group scheme is called the (iterative differential) {\markdef Galois
  group scheme} of the extension $R$ over $F$ -- denoted by
$\underline{\Gal}(R/F)$ --, or also, the
Galois group scheme of  the extension $E$ over $F$, $\underline{\Gal}(E/F)$,
where
$E=\Quot(R)$ is the corresponding PV-field.

Furthermore, $\Spec(R)$ is a $(\uGal(R/F) \times_{C}F)$-torsor and the
corresponding isomorphism of rings
\begin{equation}\label{eq:torsor-iso} \gamma:R\otimes_F R\to R\otimes_C
C[\uGal(R/F)]\end{equation}
is an $R$-linear ID-isomorphism. Again, the ring of regular functions $C[\uGal(R/F)]$ is equipped with
the trivial iterative derivation.

On the other hand, if $(R,\theta_R)$ is an ID-simple ID-ring extending $(F,\theta)$ with the same
constants, and if there is an $R$-linear ID-isomorphism $\gamma:R\otimes_F R\to R\otimes_C
C[{\mathcal G}]$ for some affine group scheme ${\mathcal G}\leq \GL_{n,C}$ corresponding to an
action of ${\mathcal G}$, then $R/F$ is indeed a Picard-Vessiot ring for some IDE (cf.
\cite{am:gticngg}, Prop.~10.12).

For later purposes, also keep in mind that for a finite Picard-Vessiot extension $R/F$, the PV-ring
$R$ already is a field. Hence, in that case the quotient field $E$ coincides with the PV-ring $R$.

\comment{
This torsor isomorphism (\ref{eq:torsor-iso}) is the key tool to establish the Galois
correspondence between
the closed subgroup schemes of $\G=\uGal(R/F)$ and the intermediate ID-fields
of the extension $E/F$, in more detail:
\begin{thm}{\bf (Galois correspondence)}\label{galois_correspondence}
Let $E/F$ be a PV-extension with PV-ring $R$ and Galois group scheme
$\G$.

Then there is an inclusion reversing bijection between
$$\fH:=\{ \H \mid \H\leq\G \text{ closed subgroup scheme of }\G
\}$$
and
$$\fM:=\{ M \mid F\leq M\leq E \text{ intermediate ID-field} \}$$
given by 
$\Psi:\fH \to \fM,\H\mapsto E^{\H}$ and 
$\Phi:\fM \to \fH, M\mapsto \underline{\Gal}(E/M)$.\\
With respect to this bijection, $\H\in \fH$ is a normal subgroup of $\G$, if and
only if $E^{\H}$ is a PV-field over $F$. In this case the Galois
group scheme $\underline{\Gal}(E^{\H}/F)$ is isomorphic to $\G/\H$.
\end{thm}

(See \cite{am:gticngg}, Thm.~11.5, resp. \cite{am:igsidgg}, Prop.~3.4 and 
Thm.~3.5 for the proof of this theorem.) 
The invariants $E^\H$ are defined to be all elements $e=\frac{r}{s}\in E$, such
that, for all 
$C$-algebras $D$, and all $h\in\H(D)$,
$$\frac{h(r\otimes 1)}{h(s\otimes 1)}=e\otimes 1\in\Quot(E\otimes_C D),$$
where $\Quot(E\otimes_C D)$ denotes the localisation of $E\otimes_C D$ by all
nonzerodivisors.
}

\section{Iterative derivations compatible with addition}\label{sec:it-der-on-ab-schemes}
 Let $C$ be a field of positive characteristic $p$, $k=C(t)$ the rational function field with
iterative derivation by $t$, and let $A/C$ be a connected abelian scheme over $C$. The addition map
on
$A$ will be denoted by $\oplus:A\times A\to A$ (and the subtraction by $\ominus$).\\
Let $K_A$ denote the function field of $A$. Let $(L,\theta)$ be the field $L=K_A(t)$
with some higher derivation $\theta$ extending the one on $k=C(t)$, and let $D$ be the field $K_A$
equipped with the trivial higher derivation. The higher derivations of $L$ and $D$ are extended to a
higher derivation (also
denoted by $\theta$) on $LD:=L\cdot D:=\Quot(L\otimes_C D)$.

The map $\oplus$ induces a homomorphism of fields $K_A\to K_{A\times A}=K_A\cdot K_A$ and also a homomorphism $L\to L\cdot D$ by $t$-linear extension. Extending again
$D$-linearly, we obtain an isomorphism $\rho:LD\to LD$. This isomorphism fixes exactly the
elements in $D(t)\subseteq LD$, i.e.~$D(t)=\{ x\in LD \mid \rho(x)=x\}$.
Actually $\rho$ is nothing else than the homomorphism on the generic fibers corresponding to
$A_{C(t)}\times A \to 
A_{C(t)}\times A, (p_1,p_2)\mapsto (p_1\oplus p_2,p_2)$.

\begin{lem}\label{lem:rho-HD-homo}
With notation as above, let $\eta_L\in A(L)$ be the generic point, and let $\theta_{*}:A(L)\to
A(L[[T]])$ be the map induced by $\theta$. Then
 $\rho$ is an HD-homomorphism if and only if
$\eta_L \ominus \theta_{*}(\eta_L)\in A(C(t)[[T]])$.
\end{lem}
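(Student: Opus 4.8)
The plan is to reduce the defining identity of an HD-homomorphism, $\theta\circ\rho=\rho[[T]]\circ\theta$, to a statement about the generic point, and then to recognise that statement as the $\rho$-invariance of the difference $\eta_L\ominus\theta_{*}(\eta_L)$. First I would observe that $\theta\circ\rho$ and $\rho[[T]]\circ\theta$ already agree on the subfield $D$ (there $\theta$ is trivial and $\rho$ is the identity) and on $t$ (where $\theta(t)=t+T$ and $\rho(t)=t$). Since $LD$ is generated as a field by $t$, by the second copy $D$, and by the first copy $K_A\subseteq L$, and since both composites are ring homomorphisms, it suffices to test the identity on $f\in K_A$. Writing $\eta_D\in A(D)$ for the second generic point (so that $\eta_L,\eta_D$, regarded over $LD$, are the two projections of the generic point of $A\times A$), the construction of $\rho$ gives $\rho(f)=(\eta_L\oplus\eta_D)^{*}(f)$ for $f\in K_A$.

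Next I would translate the two sides into points of $A(LD[[T]])$ by reading off their pullbacks on $K_A$. Using that $\theta_{*}$ is a group homomorphism (being induced by the ring homomorphism $\theta\colon LD\to LD[[T]]$) together with $\theta_{*}(\eta_D)=\eta_D$ (as $\theta$ is trivial on $D$), the left-hand side $f\mapsto\theta(\rho(f))$ is the point $\theta_{*}(\eta_L)\oplus\eta_D$, while the right-hand side $f\mapsto\rho[[T]](\theta(f))$ is $(\rho[[T]])_{*}\big(\theta_{*}(\eta_L)\big)$. Hence $\rho$ is an HD-homomorphism if and only if
\[
\theta_{*}(\eta_L)\oplus\eta_D=(\rho[[T]])_{*}\big(\theta_{*}(\eta_L)\big)\quad\text{in }A(LD[[T]]).
\]
Now set $\epsilon:=\eta_L\ominus\theta_{*}(\eta_L)\in A(L[[T]])$, so $\theta_{*}(\eta_L)=\eta_L\ominus\epsilon$; note $\theta^{(0)}=\id$ forces $\epsilon\equiv e$ modulo $T$, where $e$ is the identity section. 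Since $(\rho[[T]])_{*}$ is a group homomorphism and $(\rho[[T]])_{*}(\eta_L)=\eta_L\oplus\eta_D$ (the same computation as for $\rho$, extended constantly in $T$), the right-hand side equals $(\eta_L\oplus\eta_D)\ominus(\rho[[T]])_{*}(\epsilon)$. As $A(LD[[T]])$ is abelian the summand $\eta_D$ cancels, and the displayed condition becomes simply $(\rho[[T]])_{*}(\epsilon)=\epsilon$.

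It remains to show $(\rho[[T]])_{*}(\epsilon)=\epsilon$ if and only if $\epsilon\in A(C(t)[[T]])$, and this is the delicate point, precisely because $\epsilon$ reduces to $e$ rather than to the generic point, so it does \emph{not} correspond to a homomorphism out of $K_A$. To handle it I would pass to the formal group of $A$ at $e$: choosing formal parameters $w_1,\dots,w_g$ ($g=\dim A$) in $\O_{A,e}$, the point $\epsilon$ is given by coordinates $c_i=\epsilon^{*}(w_i)\in T\,L[[T]]$, and since $(\rho[[T]])_{*}$ fixes $e$ and is induced by $\rho[[T]]$, the point $(\rho[[T]])_{*}(\epsilon)$ has coordinates $\rho[[T]](c_i)$. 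Thus the condition is equivalent to $c_i\in\mathrm{Fix}(\rho[[T]])=D(t)[[T]]$ for all $i$; intersecting with $L[[T]]$ and using that the fixed field of $\rho$ is exactly $D(t)$ (so $L\cap D(t)=C(t)$, as recorded before the lemma) gives $c_i\in C(t)[[T]]$, i.e.\ $\epsilon\in A(C(t)[[T]])$. The main obstacle is exactly this last step: making sense of the action of $(\rho[[T]])_{*}$ on a deformation of the identity section and reducing its invariance to an invariance of power-series coefficients through the formal group.
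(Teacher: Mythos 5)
Your argument is correct and follows essentially the same route as the paper's: both reduce the identity $\theta\circ\rho=\rho[[T]]\circ\theta$ to the equality of points $\theta_*(\eta_L)\oplus\eta_D=\rho_*[[T]]\bigl(\theta_*(\eta_L)\bigr)$, then to the $\rho_*[[T]]$-invariance of $\eta_L\ominus\theta_*(\eta_L)$, which is identified with rationality over $D(t)[[T]]\cap L[[T]]=C(t)[[T]]$. The only difference is that you spell out two steps the paper leaves implicit, namely the reduction of the homomorphism identity to the generic point and the equivalence between $\rho_*[[T]]$-invariance and $D(t)[[T]]$-rationality via formal coordinates at the identity section.
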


\begin{proof} 

Since in any case $\eta_L \ominus \theta_{*}(\eta_L)\in A(L[[T]])$, the condition is equivalent to
saying that $\eta_L \ominus \theta_{*}(\eta_L)\in A(D(t)[[T]])\subseteq A(LD[[T]])$.

Let $\eta_D$ denote the generic point of $A$ in $A(D)$, and $\rho_*:A(LD)\to A(LD)$ the map induced
by $\rho$. Then by construction, one has $\rho_*(\eta_L)=\eta_L\oplus \eta_D$, and therefore,
$\theta_*(\rho_*(\eta_L))=\theta_*(\eta_L\oplus \eta_D)=\theta_*(\eta_L)\oplus \eta_D$, since
$\theta$ acts trivially on $D$.

Hence:
\begin{eqnarray*}
\eta_L \ominus \theta_{*}(\eta_L)\in A(D(t)[[T]]) &\LRa& (\rho[[T]])_*(\eta_L \ominus
\theta_{*}(\eta_L))=\eta_L \ominus \theta_{*}(\eta_L)\\
&\LRa& \rho_*(\eta_L) \ominus (\rho[[T]])_*(\theta_{*}(\eta_L))=\eta_L \ominus \theta_{*}(\eta_L)\\
&\LRa& (\eta_L\oplus \eta_D) \ominus (\rho[[T]])_*(\theta_{*}(\eta_L))=\eta_L \ominus
\theta_{*}(\eta_L)\\
&\LRa& \theta_*(\eta_L)\oplus \eta_D = (\rho[[T]])_*(\theta_{*}(\eta_L))\\
&\LRa& \theta_*(\rho_*(\eta_L))=(\rho[[T]])_*(\theta_{*}(\eta_L))
\end{eqnarray*} 
Since $\eta_L$ is the generic point of $A$, the last equality is equivalent to
$\theta\circ \rho =\rho[[T]]\circ \theta$, i.e.~to the condition that $\rho$ is an HD-homomorphism.
\end{proof}

\begin{thm}\label{thm:commuting with rho}
We use notation as above. Let $C(t)[[T,U]]$ be the power series ring over $C(t)$ in two variables $
T$ and $U$ and let $R$ denote the subring of $C(t)[[T,U]]$ of those power series $P(t,T,U)$ such
that $P(t+U,T,0)=P(t,T,U)$. 

Then $\theta$ is an iterative derivation and $\rho$ is an
ID-homomorphism if and only if $\theta_{U,*}(\eta_L)\ominus \theta_{T+U,*}(\eta_L)\in A(R)$.

As already mentioned earlier, $\theta_{U}:LD\to LD[[U]]$ and $\theta_{T+U}:LD\to LD[[T,U]]$
denote the maps $\theta$ with
$T$ replaced by $U$ and $T+U$, respectively, and $\theta_{U,*}:A(LD)\to A(LD[[U]])$ as well as $\theta_{T+U,*}:A(LD)\to A(LD[[T,U]])$ the induced maps.
\end{thm}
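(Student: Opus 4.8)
The plan is to combine the two conditions --- that $\theta$ is iterative and that $\rho$ is an ID-homomorphism --- into a single condition expressed in terms of the generic point, using the characterization of iterativeness via the commuting diagram $\theta_U[[T]]\circ\theta_T=\theta_{T+U}$ already established in the excerpt, together with Lemma \ref{lem:rho-HD-homo}. The key observation is that the subring $R$ is designed precisely to encode \emph{both} conditions simultaneously: the relation $P(t+U,T,0)=P(t,T,U)$ couples the substitution $t\mapsto t+U$ (which records the action of $\theta_U$ on $t$) with the vanishing of the $U$-variable, thereby mixing the iteration rule and the compatibility with $\rho$.

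First I would establish the forward implication. Assuming $\theta$ is iterative, I rewrite the iteration rule on the generic point $\eta_L$ using $\theta_U[[T]]\circ\theta_T=\theta_{T+U}$, obtaining $\theta_{U,*}(\theta_{T,*}(\eta_L))=\theta_{T+U,*}(\eta_L)$. The difference $\theta_{U,*}(\eta_L)\ominus\theta_{T+U,*}(\eta_L)$ then measures the failure of this relation in a single variable $U$ after applying $\theta_U$; setting $T=0$ recovers the constant term while the full expression compares the two iterates. I would then show that assuming additionally that $\rho$ is an HD-homomorphism, hence (by Lemma \ref{lem:rho-HD-homo}) that $\eta_L\ominus\theta_{T,*}(\eta_L)\in A(C(t)[[T]])$, forces the combined element $\theta_{U,*}(\eta_L)\ominus\theta_{T+U,*}(\eta_L)$ to land in $A(R)$. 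Concretely, the iterativeness controls the dependence on $T$ while the $\rho$-condition controls the dependence so that substituting $t\mapsto t+U$ and then $U\mapsto 0$ reproduces the original, which is exactly the defining relation $P(t+U,T,0)=P(t,T,U)$ of $R$.

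For the converse I would argue that membership in $A(R)$ decomposes into the two separate conditions upon specialization. Setting $U=0$ in the defining relation of $R$ and in the element $\theta_{U,*}(\eta_L)\ominus\theta_{T+U,*}(\eta_L)$ should recover the HD/ID-homomorphism condition for $\rho$, namely $\eta_L\ominus\theta_{T,*}(\eta_L)\in A(C(t)[[T]])$, while the full membership together with the substitution relation should yield $\theta_{U,*}\circ\theta_{T,*}(\eta_L)=\theta_{T+U,*}(\eta_L)$ on the generic point, which by genericity gives $\theta_U[[T]]\circ\theta_T=\theta_{T+U}$ and hence that $\theta$ is iterative. Here I would invoke that $\eta_L$ is generic, so equalities of points translate back into equalities of the higher derivations on all of $L$ (and then on $LD$).

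The main obstacle I expect is the careful bookkeeping of \emph{where} the various generic points and their $\theta$-images live, and in particular tracking how the substitution $t\mapsto t+U$ interacts with $\oplus$ and $\ominus$ on $A$-valued points over the power series rings. The defining relation of $R$ is subtle because it simultaneously involves a \emph{substitution} in $t$ and an \emph{evaluation} at $U=0$; verifying that the element $\theta_{U,*}(\eta_L)\ominus\theta_{T+U,*}(\eta_L)$ satisfies exactly this relation --- and that no weaker or stronger condition emerges --- requires showing that the action of $\rho$ on $\eta_L$ (which adds $\eta_D$, per the proof of Lemma \ref{lem:rho-HD-homo}) precisely matches the shift $t\mapsto t+U$ at the level of the $U$-variable. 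I would organize this by first reducing everything to statements about $\eta_L\ominus\theta_{T,*}(\eta_L)$ and its $\theta_U$-image, and only then unwinding the group-law manipulations, so that the genericity argument can be applied cleanly at the end.
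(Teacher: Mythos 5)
Your proposal is correct and follows essentially the same route as the paper: reduce to the one-variable element $\eta_L\ominus\theta_{T,*}(\eta_L)$ via Lemma \ref{lem:rho-HD-homo}, use the identity $\theta_{U,*}(\eta_L)\ominus\theta_{T+U,*}(\eta_L)=\theta_{U,*}[[T]]\bigl(\eta_L\ominus\theta_{T,*}(\eta_L)\bigr)$ together with the fact that $\theta_U$ acts on $C(t)[[T]]$ as $t\mapsto t+U$ to match the defining relation of $R$, and in the converse direction specialize $U\mapsto 0$ and invoke genericity of $\eta_L$ to recover both the HD-condition on $\rho$ and $\theta_{T+U}=\theta_U[[T]]\circ\theta_T$. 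This is exactly the paper's argument.
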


\begin{proof}
Let us first remark that $R$ is nothing else than the image of $C(t)[[T]]$ under the homomorphism $\theta_U[[T]]$, since the map $\theta_U[[T]]$ on $C(t)[[T]]$ is just replacing $t$ by $t+U$.\\
Now assume that $\theta$ is an iterative derivation such that $\rho$ is an ID-homo\-morphism. Since
$\theta$ is an iterative derivation, one has $\theta_{U+T}=\theta_U[[T]]\circ \theta_T$, and therefore
$\theta_{U,*}(\eta_L)\ominus \theta_{T+U,*}(\eta_L)=(\theta_{U}[[T]])_{*}\left( \eta_L\ominus
\theta_{T,*}(\eta_L) \right)$. Since $\rho$ is an ID-homomorphism, one has $\eta_L\ominus
\theta_{T,*}(\eta_L)\in A(C(t)[[T]])$ by the previous lemma. Hence, we obtain
$(\theta_{U}[[T]])_{*}\left( \eta_L\ominus \theta_{T,*}(\eta_L) \right)\in A(C(t)[[T,U]])$.
By the characterisation of $R$ above, the point $(\theta_{U}[[T]])_{*}\left( \eta_L\ominus \theta_{T,*}(\eta_L) \right)$ is indeed $R$-valued.

For the converse, let $\theta_{U,*}(\eta_L)\ominus \theta_{T+U,*}(\eta_L)\in A(R)$. Mapping $U$ to $0$ leads to 
$\eta_L\ominus \theta_{T,*}(\eta_L)\in  A(C(t)[[T]])$, hence $\rho$ is an HD-homomorphism by the
previous lemma. As before, the condition that the expression is in $A(R)$ implies that we obtain
the same element when mapping $U\mapsto 0$ and applying $(\theta_{U}[[T]])_{*}$. Hence
\begin{eqnarray*}
\theta_{U,*}(\eta_L)\ominus
\theta_{T+U,*}(\eta_L)&=&(\theta_{U}[[T]])_{*}\left( \theta_{0,*}(\eta_L)\ominus
\theta_{T+0,*}(\eta_L)\right)\\
&=&\theta_{U,*}(\eta_L)\ominus (\theta_{U}[[T]])_{*}\left(\theta_{T,*}(\eta_L)\right)
\end{eqnarray*}
This means $\theta_{T+U,*}(\eta_L)= (\theta_{U}[[T]])_{*}\left(\theta_{T,*}(\eta_L)\right)$. Since
$\eta_L$ is the generic point, this implies
$\theta_{T+U}=\theta_{U}[[T]]\circ \theta_{T}$, and therefore $\theta$ is iterative.
\end{proof}

\begin{rem}
So far, we didn't use commutativity of $\oplus$. Hence, all the statements made so far are also
valid for non-commutative connected group schemes instead of abelian schemes.
\end{rem}

\section{Torsion schemes as Galois group schemes}\label{sec:galois-groups}

We use the notation of the previous section. In particular, $A/C$ is an abelian scheme and $L$ is
the function field of $A_{C(t)}$ equipped with a higher derivation $\theta$ extending the
iterative derivation with respect to $t$ on $C(t)$.

\begin{thm}\label{thm:torsion as galois}
Let $\theta$ be an iterative derivation on $L$ such that $\rho$ is an ID-homomorphism. Also assume
that the constants of $(L,\theta)$ are $C$. For $n\in
\NN$, let $[n]:A\to A$ denote multiplication by $n$, $A[n]=\Ker([n])$ the $n$-torsion scheme, and
$[n]^\#:L\to L$ the corresponding map on the function fields of $A_{C(t)}$. Then
\begin{enumerate}
\item the subfield $[n]^\#(L)\subseteq L$ is an ID-subfield of $L$,
\item the extension $L/[n]^\#(L)$ is a PV-extension and the iterative differential Galois group
scheme is given as 
$$\uGal(L/[n]^\#(L)) \isom A[n]$$
as affine group schemes over $C$.
\end{enumerate}
\end{thm}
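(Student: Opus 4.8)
The plan is to verify that $L$ over $F:=[n]^\#(L)$ satisfies the criterion for being a Picard--Vessiot extension with Galois group scheme $A[n]$, by constructing an explicit $R$-linear ID-isomorphism $\gamma: L\otimes_F L\to L\otimes_C C[A[n]]$ and invoking the converse statement recalled after \eqref{eq:torsor-iso} (i.e. \cite{am:gticngg}, Prop.~10.12). First I would address part (1): to see that $[n]^\#(L)$ is an ID-subfield, I would observe that multiplication $[n]:A\to A$ is an isogeny that commutes with the group law $\oplus$, and hence the induced map $[n]^\#$ on function fields is compatible with the isomorphism $\rho$ of the previous section. Since $\rho$ is an ID-homomorphism by hypothesis, and since $\theta$ is iterative, the image $[n]^\#(L)$ should be $\theta$-stable; concretely, I would check that $\theta\circ [n]^\# = [n]^\#[[T]]\circ\theta$ as maps $L\to L[[T]]$, i.e.~that $[n]^\#$ is itself an ID-homomorphism. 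This reduces, via the generic point $\eta_L$, to showing $[n]_*\bigl(\theta_*(\eta_L)\bigr)=\theta_*\bigl([n]_*(\eta_L)\bigr)$, which follows from compatibility of $\theta$ with $\oplus$ (Theorem \ref{thm:commuting with rho}) and the fact that $[n]$ is built from iterated addition.

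For part (2), the central point is to identify $A[n]$ with the functor $\underline{\Aut}^{ID}(L/F)$. I would start from the geometric fact that $[n]:A\to A$ is a finite flat morphism whose kernel is the group scheme $A[n]$, and that $A_{C(t)}\to A_{C(t)}$ under $[n]$ realizes $L/F$ as the function-field extension of an $A[n]$-torsor. The action of $A[n]$ on $A$ by translation induces, exactly as in the construction of $\rho$, an action on $L$, and I would package this into the candidate map
\begin{equation*}
\gamma:\ L\otimes_F L\ \longrightarrow\ L\otimes_C C[A[n]],
\end{equation*}
coming from the morphism $A_{C(t)}\times_{[n]} A_{C(t)}\cong A_{C(t)}\times A[n]$, $(p_1,p_2)\mapsto(p_1,\,p_1\ominus p_2)$, whose restriction to the fiber of $[n]$ lands in $A[n]$. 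I would then have to check that this $\gamma$ is an $L$-linear ID-isomorphism: $L$-linearity and bijectivity come from the torsor structure, while the ID-compatibility is precisely where the hypothesis that $\rho$ is an ID-homomorphism enters, since the translation action on $L$ is governed by the same $\oplus$-compatibility of $\theta$.

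Before Prop.~10.12 applies I must also verify that $L$ is ID-simple as an extension of $F$ and that the constants do not grow, i.e.~$C_L=C_F=C$; the latter is assumed in the hypotheses, and ID-simplicity I would deduce from the existence of the torsor isomorphism together with finiteness of $[n]$ (a finite ID-ring extension with no new constants and admitting such a $\gamma$ is forced to be ID-simple, and being finite it is already a field, matching the remark that finite PV-rings are fields). Granting the criterion, $L/F$ is a PV-extension and $\underline{\Gal}^\theta(L/F)$ is represented by the group scheme through which $\gamma$ is defined, namely $A[n]$, giving the asserted isomorphism.

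I expect the main obstacle to be the verification that $\gamma$ is an \emph{ID}-isomorphism, not merely an $L$-linear ring isomorphism. The difficulty is that the iterative derivation $\theta$ on $L$ is only known to be compatible with the full addition $\oplus$ through the single map $\rho$; transporting this to compatibility with the restricted translation action of the finite subscheme $A[n]$, and checking it commutes with $\theta$ after the tensor-product extension of derivations, is the delicate step. Concretely, one must show that the generic-point identities used in the proof of Theorem \ref{thm:commuting with rho} descend correctly along the isogeny $[n]$, and that no obstruction arises from the inseparability that $A[n]$ may carry in characteristic $p$. Everything else --- the torsor structure, $L$-linearity, and the constant computation --- should be comparatively routine once this compatibility is in place.
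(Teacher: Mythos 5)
Your part (2) follows essentially the paper's route: build the map $L\otimes_{[n]^\#(L)}L\to L\otimes_C C(A[n])$ from the translation action of $A[n]$, check that it is an ID-isomorphism using that it is a specialisation of $\rho$, and invoke \cite{am:gticngg}, Prop.~10.12 (ID-simplicity is free here, since $L$ is a field). But your part (1) contains a genuine error. You propose to prove that $[n]^\#$ is itself an ID-endomorphism of $L$, i.e.\ $\theta\circ[n]^\#=[n]^\#[[T]]\circ\theta$, and you reduce this to the identity $[n]_*(\theta_*(\eta_L))=\theta_*([n]_*(\eta_L))$. That identity is a tautology --- both sides correspond to the ring map $\theta\circ[n]^\#$, since $(\theta_*P)^\#=\theta\circ P^\#$ and $([n]_*P)^\#=P^\#\circ[n]^\#$ --- so it does not encode the property you want. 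Worse, the property itself is false in general: compatibility of $\theta$ with $\rho$ means $\theta_*(\eta_L)=\eta_L\ominus c(T)$ with $c(T)\in A(C(t)[[T]])$, whence $\theta\circ[n]^\#$ corresponds to the point $[n]_*\eta_L\ominus [n]_*(c(T))$ while $[n]^\#[[T]]\circ\theta$ corresponds to $[n]_*\eta_L\ominus c(T)$; these differ unless $(n-1)\cdot c(T)=0$. So for a nontrivial $\theta$ of the kind the paper constructs, $[n]^\#$ does not commute with $\theta$.

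The correct argument --- the paper's --- does not need $[n]^\#$ to be an ID-map, only its image to be ID-stable. One characterises $[n]^\#(L)$ as the equalizer of the two maps $\bar{\rho},\ \id\otimes 1: L\to L\otimes_C C(A[n])$, where $\bar{\rho}$ is the specialisation of $\rho$ coming from the addition $A\times A[n]\to A$. Both are ID-homomorphisms ($C(A[n])$ carrying the trivial iterative derivation), and the equalizer of two ID-homomorphisms is an ID-subfield. Note that this equalizer description also carries the weight in part (2): it is precisely what makes the $L$-linear extension $\bar{\rho}_L:L\otimes_{[n]^\#(L)}L\to L\otimes_C C(A[n])$ injective, after which bijectivity follows from the degree count $[L:[n]^\#(L)]=\dim_C C(A[n])$. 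Your appeal to ``the torsor structure'' for bijectivity silently uses this same fact, so you should make the equalizer characterisation explicit; doing so repairs part (1) and completes part (2) simultaneously.
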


\begin{proof}
The addition $A\times A[n]\to A$ induces a homomorphism
$\bar{\rho}:\O_A(U)\to \O_A(U)\otimes_C C\bigl[A[n]\bigr]$ for an appropriate (affine) open
subset $U\subseteq A$, where $C\bigl[A[n]\bigr]$ denotes the ring of regular functions on the affine scheme $A[n]$. The subring $[n]^\#(\O_A(U))$ is then the equalizer of $\bar{\rho}$ and
$\id\otimes 1$.\\
Furthermore, $\bar{\rho}$ can be extended to a homomorphism $\bar{\rho}:L\to L\otimes_C C\bigl[A[n]\bigr]$ by
$\bar{\rho}(t)=t\otimes 1$ and by localisation. This map $\bar{\rho}$ is actually a specialisation
of the map $\rho:L\to LD$. By assumption $\rho$ is an ID-homomorphism and therefore $\bar{\rho}$ is
an ID-homomorphism when $C\bigl[A[n]\bigr]$ is equipped with the trivial iterative derivation.\\
This shows that the equalizer $[n]^\#(L)\subseteq L$ is an ID-subfield of $L$.

The $L$-linear extension of $\bar{\rho}$ leads to an ID-homomorphism
$\bar{\rho}_L:L\otimes_{[n]^\#(L)} L\to L\otimes_C C\bigl[A[n]\bigr]$ which is a monomorphism, since
$[n]^\#(L)$ is the equalizer of $\bar{\rho}$ and $\id\otimes 1$.\\
As the degree of the extension $L/[n]^\#(L)$ equals the dimension $\dim_C(C\bigl[A[n]\bigr])$, this 
monomorphism is indeed an ID-isomorphism.\\
Therefore, the second claim follows by \cite{am:gticngg}, Prop. 10.12. (Here we use that the
constants of $L$ are indeed $C$.)
\end{proof}

\section{Extension of iterative derivations}\label{sec:ID-extensions}

In this section we develop criteria for a higher derivation to be iterative. This will be used in the last section. We will assume that $C$ is a field of characteristic $p>0$, and $(F,\theta)$ is an
ID-field containing $C(t)$ such that $\theta|_{C(t)}$ is the iterative derivation with
respect to~$t$
(compare Ex.~\ref{ex:ID-fields}(\ref{item:der by t})).

\begin{thm}\label{thm:equivalent conditions}
Let $L$ be a finitely generated separable field extension of $F$ with a higher derivation on $L$
extending $\theta$ on $F$, which will also be denoted by $\theta$. Let $x_1,\dots, x_k$ be a
separating transcendence
basis of $L$ over $F$,
 and $\theta(x_i)=: x_i+ \sum_{n=1}^\infty \xi_{i,n} T^n$ for all
$i=1,\dots, k$.\\
Assume that $\xi_{i,n}\in L^pF\subset L$ for all $i=1,\dots, k$ and all $n\geq 1$. Then for any
$\ell_0\geq 0$ the following are equivalent:
\begin{enumerate}
\item The iteration rule holds on $L$ up to level $p^{\ell_0+1}$.
\item For all $0\leq \ell \leq \ell_0$, one has:
\begin{enumerate}
\item for all $0\leq m< p^\ell$ and $0<a<p$:
$\theta^{(m+ap^\ell)}=\frac{1}{a!}\left(\theta^{(p^\ell)}\right)^a\circ \theta^{(m)}$,
\item $\left(\theta^{(p^\ell)}\right)^p=0$, and
\item for all $0\leq j< \ell$: $\theta^{(p^j)}\circ
\theta^{(p^\ell)}=\theta^{(p^\ell)}\circ \theta^{(p^j)}$.
\end{enumerate}
\item The iteration rule holds up to level $p^{\ell_0+1}$ for all $x_i$ ($i=1,\dots, k$).
\item[(3')]  Condition (2) holds when evaluated at all $x_i$ ($i=1,\dots, k$).
\item\label{item:explicite formula} For all $0\leq \ell\leq \ell_0$ and $i=1,\dots k$, one has:
$$\xi_{i,p^\ell}+ \sum_{m=1}^{p^\ell-1} \theta^{(p^\ell)}(\xi_{i,m})(-t)^m \in
\bigcap_{0\leq j<\ell} \Ker\left(\theta^{(p^j)}\right) \cap \Ker\left(\theta^{(p^\ell(p-1))}\right)
,$$
for all $1<a<p$:
$\xi_{i,ap^\ell}=\frac{1}{a!}\left(\theta^{(p^\ell)}\right)^{a-1}(\xi_{i,p^\ell})$,
and for all $0<m< p^\ell$ and $0<a<p$:
$$\xi_{i,m+ap^\ell}=\frac{1}{a!}\left(\theta^{(p^\ell)}\right)^a(\xi_{i,m}).$$
\end{enumerate}
\end{thm}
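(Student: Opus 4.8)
The plan is to prove the cycle of implications
\[
(1)\Longrightarrow(2)\Longrightarrow(\text{4})\Longrightarrow(3')\Longrightarrow(3)\Longrightarrow(1),
\]
together with the ``evaluated'' versions $(3)$ and $(3')$, using Lemma~\ref{lem:Things to show} to pass between the full statements and their evaluations at the separating transcendence basis. The underlying principle is that the iteration rule is a \emph{field} condition: by Lemma~\ref{lem:Things to show}(1), for each $\ell$ the set of $x$ satisfying $\theta^{(i)}\circ\theta^{(j)}(x)=\binom{i+j}{i}\theta^{(i+j)}(x)$ for all $i+j<p^\ell$ is a subfield of $L$. Hence the iteration rule up to $p^{\ell_0+1}$ holds on all of $L$ if and only if it holds on a generating set, namely $F$ together with the $x_i$; since it already holds on $F$ by hypothesis, this immediately yields the equivalences $(1)\LRa(3)$ and $(2)\LRa(3')$. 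So the real content is the purely local analysis at a single coordinate $x=x_i$.

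First I would establish $(1)\LRa(2)$. The forward direction is a matter of specializing the iteration rule to the indices of the form $(p^\ell,m)$, $(p^\ell,\,ap^\ell)$, and $(p^j,p^\ell)$ appearing in $(2)$, noting that $\binom{ap^\ell}{p^\ell}\equiv a\pmod p$ and $\binom{p^\ell+\dots}{\,\cdot\,}$ reduce via Lemma~\ref{lem:Things to show}'s Lucas-type congruence (part (3) of the unnumbered lemma). The reverse direction $(2)\Rightarrow(1)$ is the more delicate half: I would write an arbitrary pair $j,m$ with $j+m\le p^{\ell_0+1}$ in base $p$ and show that any $\theta^{(N)}$ with $N<p^{\ell_0+1}$ can be factored as a product $\tfrac{1}{\prod a_\ell!}\prod_\ell\bigl(\theta^{(p^\ell)}\bigr)^{a_\ell}$ using (2a) repeatedly; the commutativity relations (2c) let me reorder these factors freely, and the nilpotency (2b) guarantees the factorization terminates and that cross-terms with exponent $\ge p$ vanish. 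Composing two such factorizations and recollecting gives exactly $\binom{j+m}{j}\theta^{(j+m)}$ after invoking the Lucas congruence for $\binom{j+m}{j}$.

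The step I expect to be the main obstacle is $(2)\LRa(\text{4})$, i.e.\ translating the abstract operator relations of $(2)$ into the explicit closed-form expressions for the coefficients $\xi_{i,n}$. The key device here is Lemma~\ref{lem:Things to show}(2): under the inductive assumption that the iteration rule holds below $p^\ell$, that lemma gives $\theta^{(p^j)}\bigl(\sum_{m=0}^{p^\ell-1}\theta^{(m)}(x)(-t)^m\bigr)=0$ for all $j<\ell$, which is precisely the mechanism that produces the kernel conditions $\bigcap_{0\le j<\ell}\Ker(\theta^{(p^j)})$ in $(\text{4})$. Applying $\theta^{(p^\ell)}$ to $x=x_i$ and extracting the coefficient of $T^{p^\ell}$, using $\theta^{(m)}(x_i)=\xi_{i,m}$ and the product rule $\theta^{(p^\ell)}(fg)=\sum_{u+v=p^\ell}\theta^{(u)}(f)\theta^{(v)}(g)$, should yield the displayed expression $\xi_{i,p^\ell}+\sum_{m=1}^{p^\ell-1}\theta^{(p^\ell)}(\xi_{i,m})(-t)^m$; the hypothesis $\xi_{i,n}\in L^p\cdot F$ is what forces the relevant intermediate $\theta^{(v)}$-terms on $t$ to collapse and makes this identification clean. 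The formulas for $\xi_{i,ap^\ell}$ and $\xi_{i,m+ap^\ell}$ then follow by evaluating (2a) at $x_i$ and reading off $T$-coefficients, while the extra condition $\Ker(\theta^{(p^\ell(p-1))})$ encodes the nilpotency (2b). I would organize the whole argument as an induction on $\ell$ from $0$ to $\ell_0$, so that at stage $\ell$ the iteration rule below $p^\ell$ (already secured) legitimizes the use of Lemma~\ref{lem:Things to show}(2), and the equivalence of the conditions at level $\ell$ advances the induction.
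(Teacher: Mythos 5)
Your plan matches the paper's proof in all essentials: the same set of implications $(1)\Leftrightarrow(2)$, $(1)\Leftrightarrow(3)$, $(2)\Leftrightarrow(3')$, $(2)\Rightarrow(4)\Rightarrow(3')$, the same base-$p$ factorization of $\theta^{(N)}$ into reordered powers of the $\theta^{(p^\ell)}$ for $(2)\Rightarrow(1)$, and the same double use of Lemma~\ref{lem:Things to show} --- part (1) for the reduction to the generators $x_i$ and part (2) for producing the kernel conditions in (4) --- organized as the same induction on $\ell$. The only point to make explicit when writing this up is that the subfield generated by $F$ and the $x_i$ is only $F(x_1,\dots,x_k)$, so passing to all of $L$ additionally uses the uniqueness of the extension of the iterative structure across the finite separable step $L/F(x_1,\dots,x_k)$, as the paper notes.
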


\begin{rem}
Condition (\ref{item:explicite formula}) of the previous theorem, gives a recursive rule for
constructing an iterative derivation on $L$. In more detail:
\begin{enumerate}
\item Choose $\xi_{i,1}\in L^pF \cap \Ker\left(\theta^{(p-1)}\right)=L^p \bigl(F \cap
\Ker\left(\theta^{(p-1)}\right)\bigr)$ arbitrarily for all $i=1,\dots, k$.
\item Calculate $\xi_{i,a}:=\frac{1}{a!}\left(\theta^{(1)}\right)^{a-1}(\xi_{i,1})$ for $1<a<p$.
\item Proceed inductively: Assume that for $\ell>0$, the elements $\xi_{i,m}$ for $m<p^{\ell}$ are
already given satisfying condition (\ref{item:explicite formula}) of the theorem. Then choose
$$\xi_{i,p^\ell}\in - \sum_{m=1}^{p^\ell-1} \theta^{(p^\ell)}(\xi_{i,m})(-t)^m +
\bigcap_{0\leq j<\ell} \Ker\left(\theta^{(p^j)}\right) \cap \Ker\left(\theta^{(p^\ell(p-1))}\right)
\cap L^pF$$
and calculate $\xi_{i,ap^\ell}$ for $1<a<p$ as well as
$\xi_{i,m+ap^\ell}$ for $0<m< p^\ell$ and $0<a<p$, by
the rules above.\\ 
Since for an element $x^p\in L^p$, one has
$\theta^{(p^\ell)}(x^p)=\left(\theta^{(p^{\ell-1})}(x)\right)^p$, the condition $\xi_{i,m}\in
L^p F$ implies that $\theta^{(p^\ell)}(\xi_{i,m})$ is
computable using only the values $\xi_{i,m}$ for $m<p^\ell$. By the same reason the set
$\bigcap_{0\leq j<\ell} \Ker\left(\theta^{(p^j)}\right) \cap \Ker\left(\theta^{(p^\ell(p-1))}\right)
\cap L^p F$ is determined by the elements $\xi_{i,m}$ for $m<p^\ell$.
\end{enumerate}
\end{rem}

\begin{proof}[Proof of Thm.~\ref{thm:equivalent conditions}]
\begin{enumerate}
\item[(1)$\Leftrightarrow$(2)] All three conditions in (2) follow directly from the iteration rule
for $\theta$. On the other hand, given the conditions in (2),  any $\theta^{(i)}$ with $i< p^{\ell_0+1}$ can be written as a composition of several $\theta^{(p^n)}$ as in Lemma \ref{lem:known-stuff}(\ref{item:expansion}). Then it is not hard to check that
 $\theta^{(i)}\circ \theta^{(j)}$ indeed equals $\theta^{(i+j)}$ whenever $i+j<p^{\ell_0+1}$. Using again this decomposition and the conditions $\left(\theta^{(p^\ell)}\right)^p=0$, one verifies that $\theta^{(i)}\circ \theta^{(j)}=0$ whenever $i,j>0$ and $i+j=p^{\ell_0+1}$.
Hence the iteration rule holds on $L$ up to level $p^{\ell_0+1}$.
 \item[(1)$\Leftrightarrow$(3)] We only have to show that (3) implies (1). 
Since the set for which the iteration rule holds up to level $p^{\ell_0+1}$ is a subfield of
$L$ (cf.~Lemma \ref{lem:Things to show}(\ref{item:subfield})) and since
$x_1,\dots, x_k$ generate $F(x_1,\dots, x_k)$ over $F$ it is immediate that the iteration rule
holds up to level $p^{\ell_0+1}$ on $F(x_1,\dots, x_k)$. But an extension of an iterative derivation to a finite separable field
extension is unique, and again an iterative derivation. So the iteration rule holds on $L$ up to level $p^{\ell_0+1}$.
\item[(2)$\Leftrightarrow$(3')] This is shown in a similar way.
\item[(1),(2) $\Rightarrow$(4)] By the iteration rule resp. condition (2)(a), one has
$$\xi_{i,m+ap^\ell}=\theta^{(m+ap^\ell)}(x_i)=\frac{1}{a!}\left(\theta^{(p^\ell)}\right)^a\circ
\theta^{(m)}(x_i)= \frac{1}{a!}\left(\theta^{(p^\ell)}\right)^a(\xi_{i,m})$$
for all $0<m\leq p^\ell$ and $0<a<p$ s.t. $m+ap^\ell<p^{\ell +1}$. Furthermore for all $1\leq m\leq  p^\ell-1$, 
\begin{eqnarray*}
\theta^{(p^\ell)}\left( \theta^{(m)}(x_i)(-t)^m\right)
&=& \sum_{k=0}^{p^\ell} \theta^{(k)}\theta^{(m)}(x_i)  (-1)^m\theta^{(p^\ell-k)}(t^m) \\
&=&  \sum_{k=0}^{p^\ell} \binom{k+m}{k}\theta^{(k+m)}(x_i) (-1)^m \binom{m}{p^\ell-k} t^{m-p^\ell+k}\\
&=& \theta^{(p^\ell+m)}(x_i) (-1)^m t^m=\theta^{(p^\ell)}(\xi_{i,m})(-t)^m,
\end{eqnarray*}
as for $k<p^\ell-m$ the second binomial coefficient vanishes and for $p^\ell>k\geq p^\ell-m$ the first one. Hence, using condition (2)(c)
and Lemma \ref{lem:Things to show}(\ref{item:cancellation}) we have
$$\theta^{(p^j)}\left( \xi_{i,p^\ell}+ \sum_{m=1}^{p^\ell-1}
\theta^{(p^\ell)}(\xi_{i,m})(-t)^m\right)=
\theta^{(p^\ell)}\theta^{(p^j)} \left(x_i+ \sum_{m=1}^{p^\ell-1} \theta^{(m)}(x_i)(-t)^m\right)=0$$
for all $0\leq j<\ell$ and by condition (2)(b)
$$\theta^{(p^{\ell}(p-1))}\left( \xi_{i,p^\ell}+ \sum_{m=1}^{p^\ell-1}
\theta^{(p^\ell)}(\xi_{i,m})(-t)^m\right)=\theta^{(p^{\ell}(p-1))}\theta^{(p^\ell)}
\left(x_i+ \sum_{m=1}^{p^\ell-1} \theta^{(m)}(x_i)(-t)^m\right)=0.$$
\item [(4)$\Rightarrow$(3')] 
The formulae for $\xi_{i,ap^\ell}$ and $\xi_{i,m+ap^\ell}$ imply the conditions (2)(a)
evaluated at $x_i$. Furthermore, by induction
$\theta^{(p^{j-1})}\theta^{(p^{\ell-1})}=\theta^{(p^{\ell-1})}\theta^{(p^{j-1})}$ for all $j<\ell$
and hence $\theta^{(p^j)}\theta^{(p^{\ell})}(x)=\theta^{(p^{\ell})}\theta^{(p^j)}(x)$ for all $x\in
L^pF$, in particular for $x=\theta^{(m)}(x_i)$. This implies
\begin{eqnarray*}
0&=&\theta^{(p^j)}\left( \xi_{i,p^\ell}+ \sum_{m=1}^{p^\ell-1}
\theta^{(p^\ell)}(\xi_{i,m})(-t)^m\right) \\
&=&\theta^{(p^j)}( \xi_{i,p^\ell})-\theta^{(p^\ell)}\theta^{(p^j)}(x_i)+
\theta^{(p^\ell)}\theta^{(p^j)}(x_i)+ \theta^{(p^j)} \left(\sum_{m=1}^{p^\ell-1}
\theta^{(p^\ell)}(\theta^{(m)}(x_i))(-t)^m\right)\\
&=& \theta^{(p^j)}\theta^{(p^\ell)}(x_i)-\theta^{(p^\ell)}\theta^{(p^j)}(x_i).
\end{eqnarray*}
The last step is obtained by the same calculation as above.

Similarly, one obtains
\begin{eqnarray*}
0&=&\theta^{(p^\ell(p-1))}\left( \xi_{i,p^\ell}+ \sum_{m=1}^{p^\ell-1}
\theta^{(p^\ell)}(\xi_{i,m})(-t)^m\right) \\
&=&\theta^{(p^\ell(p-1))} \theta^{(p^\ell)}(x_i)+
\sum_{m=1}^{p^\ell-1}\theta^{(p^\ell)}\theta^{(p^\ell(p-1))}(\xi_{i,m})(-t)^m\\
&=& \frac{1}{(p-1)!}(\theta^{(p^\ell)})^{p-1}\theta^{(p^\ell)}(x_i).
\end{eqnarray*}
\end{enumerate}
\end{proof}

\section{Example}\label{sec:example}

In this section we give an example to illustrate the previous sections.
In this example it is even
possible to give a recursive formula for constructing an iterative derivation $\theta$ which is compatible with the addition map (see Theorem
\ref{thm:strong formula}).
Indeed, it will be a sharpening of the formula in Thm.~\ref{thm:equivalent
conditions}, Item \ref{item:explicite formula}.

The example we consider is the elliptic curve $E/C$ in characteristic $p=2$ given by the equation
$x^3=z^2+z$, the neutral element of addition being given by the point $(0,0)$.

As before, $K_E/C$ denotes the function field of $E/C$, and $L=K_E(t)=C(x,z,t)$ is the HD-field with
a higher
derivation $\theta$ extending the iterative derivation with respect to $t$ on $C(t)$. The iterative
derivatives of $x$ are denoted by $\xi_m$, i.e.~$\theta(x)=:x+
\sum_{m=1}^\infty \xi_{m} T^m$, and $\eta_L:=(x,z)\in E(L)$ is the generic point of $E$.
 Furthermore,
$D=K_E$ denotes the ID-field with trivial iterative derivation.

\begin{lem}
For two points $(x_1,z_1)$ and $(x_2,z_2)$ the difference $(x_d,z_d):=(x_1,z_1)\ominus (x_2,z_2)$ is
given by:
$$x_d=x_2+\frac{x_1}{1+z_1}+\left( \frac{z_2- \frac{z_1}{1+z_1}}{x_2- \frac{x_1}{1+z_1}}\right)^2$$
and
$$z_d=\frac{z_2-\frac{z_1}{1+z_1}}{x_2-\frac{x_1}{1+z_1}}\cdot (x_d-x_2)+z_2$$
\end{lem}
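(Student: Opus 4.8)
The plan is to derive the subtraction formula on the elliptic curve $E: x^3 = z^2 + z$ directly from the chord-tangent group law, working entirely in the function field in characteristic $2$. The neutral element is $(0,0)$. First I would compute the negation map: given a point $(x_1, z_1)$, its inverse $\ominus(x_1,z_1)$ is found by intersecting the line through $(x_1,z_1)$ and the neutral point $(0,0)$ with the curve and taking the third intersection point. Since the line through the origin is $z = \lambda x$ with $\lambda = z_1/x_1$, substituting into $x^3 = z^2 + z$ gives a cubic in $x$ whose three roots correspond to the two known points $(0,0)$ and $(x_1,z_1)$ plus the third point. I expect this to yield the inverse in the form that makes $\frac{z_1}{1+z_1}$ appear — the combination $\frac{z_1}{1+z_1}$ strongly suggests that the inverse of $(x_1,z_1)$ has coordinates involving exactly this expression, so my first task is to verify $\ominus(x_1,z_1)$ explicitly and confirm it produces the shifted quantities $\frac{x_1}{1+z_1}$ and $\frac{z_1}{1+z_1}$ appearing in the statement.

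\textbf{The main computation.} Once the negation is in hand, I would reduce the subtraction $(x_1,z_1)\ominus(x_2,z_2)$ to the addition $(x_2,z_2)\oplus\bigl(\ominus(x_1,z_1)\bigr)$. Writing $\ominus(x_1,z_1)=(x_1', z_1')$ with the primed coordinates coming from the negation step, I would then apply the ordinary addition formula: the slope of the secant line joining $(x_2,z_2)$ and $(x_1',z_1')$ is $\mu=\frac{z_2-z_1'}{x_2-x_1'}$, and the $x$-coordinate of the sum is obtained by comparing coefficients in the cubic. In characteristic $2$ the sum of the three roots equals the coefficient of $x^2$ after substituting $z=\mu x + c$, which controls $x_d$ as $\mu^2$ plus a linear correction in the other two $x$-coordinates. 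The target formula $x_d = x_2 + \frac{x_1}{1+z_1} + \mu^2$ with $\mu=\frac{z_2-\frac{z_1}{1+z_1}}{x_2-\frac{x_1}{1+z_1}}$ is exactly what this produces once $x_1'=\frac{x_1}{1+z_1}$ and $z_1'=\frac{z_1}{1+z_1}$ are confirmed. The formula for $z_d$ is then immediate: the secant line is $z = \mu(x - x_2) + z_2$, and $z_d$ is its value at the reflected $x$-coordinate, which in characteristic $2$ gives precisely $z_d = \mu(x_d - x_2) + z_2$ (no sign flip is needed since $-1 = 1$).

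\textbf{The main obstacle} is getting the negation map right and verifying that it simplifies to the clean form $(\frac{x_1}{1+z_1}, \frac{z_1}{1+z_1})$. The equation $x^3 = z^2+z$ is not in short Weierstrass form, so the standard ``reflect across the $x$-axis'' recipe does not apply verbatim; the negation must be computed from first principles using the chosen neutral point $(0,0)$. I expect the algebra here to hinge on a characteristic-$2$ identity — likely that $1+z_1$ divides cleanly into the cubic relation $x_1^3 = z_1^2 + z_1 = z_1(z_1+1)$, so that $x_1^3/(1+z_1) = z_1$ and hence $\frac{x_1}{1+z_1}$ satisfies a controlled relation with $\frac{z_1}{1+z_1}$. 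Once this divisibility is exploited, the remaining work is a routine (if lengthy) substitution and coefficient comparison in the addition cubic, and the two displayed formulas fall out. I would therefore concentrate the verification effort on the negation step and treat the subsequent addition as a standard computation.
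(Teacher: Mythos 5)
Your computational route is the same as the paper's --- the paper's one-line proof is precisely ``check that $(x_d,z_d)$ is the third intersection of the curve with the line through $(x_2,z_2)$ and $\ominus(x_1,z_1)=(\frac{x_1}{1+z_1},\frac{z_1}{1+z_1})$'' --- and your negation step is sound: the line $z=(z_1/x_1)x$ through $(x_1,z_1)$ and $(0,0)$ meets the curve in the third point $(z_1/x_1^2,\,z_1^2/x_1^3)$, which the identity $x_1^3=z_1(1+z_1)$ converts into the claimed $(\frac{x_1}{1+z_1},\frac{z_1}{1+z_1})$. Your coefficient comparison ($x_d=\mu^2+x_2+x_1'$ from the sum of the roots, and $z_d=\mu(x_d-x_2)+z_2$ from the secant line) also lands on the correct formulas.

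However, the group-theoretic bookkeeping connecting this computation to the statement contains two errors that happen to cancel. First, $(x_1,z_1)\ominus(x_2,z_2)$ equals $(x_1,z_1)\oplus\bigl(\ominus(x_2,z_2)\bigr)$, not $(x_2,z_2)\oplus\bigl(\ominus(x_1,z_1)\bigr)$; the latter is its negative. Second, the third intersection point $C$ of the secant through $(x_2,z_2)$ and $\ominus(x_1,z_1)$ is not their sum but the \emph{negative} of their sum, and your justification for skipping the final negation --- ``no sign flip is needed since $-1=1$'' --- does not apply: negation on this model is $(x,z)\mapsto(\frac{x}{1+z},\frac{z}{1+z})$, not reflection in a coordinate, and it is certainly not the identity, as you yourself exploit when computing $\ominus(x_1,z_1)$. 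The clean repair is to observe that $(0,0)$ is an inflection point (its tangent $z=0$ meets $x^3=z^2+z$ with multiplicity $3$ at the origin), so any three collinear points sum to zero; then $(x_2,z_2)\oplus\bigl(\ominus(x_1,z_1)\bigr)\oplus C=0$ gives directly $C=(x_1,z_1)\ominus(x_2,z_2)$, which is exactly the point your secant computation produces. With that one-line correction your argument is complete and coincides with the paper's.
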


\begin{proof}
One only has to check, that the point $(x_d,z_d)$ is the third intersection of the elliptic curve
with the line passing through $(x_2,z_2)$ and $\ominus
(x_1,z_1)=(\frac{x_1}{1+z_1},\frac{z_1}{1+z_1})$.
\end{proof}

Let 
$$f(T):=\sum_{k=0}^\infty f_mT^m :=\theta(x)+\frac{x}{1+z}+ \left(  \frac{\theta(z)-
\frac{z}{1+z}}{\theta(x)- \frac{x}{1+z}}\right)^2\in L[[T]].$$
Then by the previous lemma, $f(T)$ is the $x$-coordinate of $\eta_L\ominus\theta_*(\eta_L)$.
For the coefficients $f_m$ we have: $f_0=0$, $f_m=\xi_m$ for odd $m$ and
$f_m=\xi_m+({\widetilde{f}_m})^2$ for even $m>0$ and an
appropriate element $\widetilde{f}_m\in L$, depending only on $x,z$ and the elements $\xi_k$ for
$k\leq m/2$. 

Furthermore, let $g(T)$ denote the $z$-coordinate of $\eta_L\ominus\theta_*(\eta_L)$,
i.e.~$\eta_L\ominus\theta_*(\eta_L)=(f(T),g(T))$ in these local coordinates.
Since this is a point on $E$, one has the relation $f(T)^3=g(T)^2+g(T)$, and hence the coefficients
$g_m$ of $g(T)=:\sum_{k=0}^\infty g_mT^m$ can be expressed in terms of the $f_m$. In more detail,
$g_0=g_1=g_2=0$ and $g_m$ can be written as a polynomial in $f_1,\dots, f_{m-2}$.

\begin{lem}\label{lem:theta-f}
Assume that $\theta$ is an iterative derivation on $L$. Then for even $m,j\in\NN\setminus \{0\}$ the difference
$\theta^{(m)}(f_j)-\binom{m+j}{m}f_{m+j}$ is a polynomial in
$\binom{(m+j)/2}{m/2}f_{(m+j)/2}$, $f_{(m+j)/2-1},\dots, f_1$, whereas for all other choices of
$m,j\in \NN$ this difference is $0$.
\end{lem}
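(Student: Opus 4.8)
The plan is to isolate the ``square part'' of $f$, run a parity argument to kill all cases where $m$ or $j$ is odd, and then close the even/even case by an induction on $m+j$. First I note that the substantive range is $m,j\ge 1$ (for $m=0$ the difference is trivially $0$ since $\theta^{(0)}=\id$, and the identity $f_j=\xi_j+\widetilde{f}_j^{\,2}$ used below is the one stated for $j\ge 1$). Since $\theta$ is an ID, the iteration rule gives, for all $m,j\ge 0$ and $\xi_0:=x$,
$$\theta^{(m)}(\xi_j)=\theta^{(m)}\theta^{(j)}(x)=\binom{m+j}{m}\theta^{(m+j)}(x)=\binom{m+j}{m}\xi_{m+j}.$$
Writing $f_j=\xi_j+\widetilde{f}_j^{\,2}$ and $f_{m+j}=\xi_{m+j}+\widetilde{f}_{m+j}^{\,2}$ (valid for $j\ge1$ and $m+j\ge1$), the $\xi$-contributions cancel and the defect becomes
$$\theta^{(m)}(f_j)-\binom{m+j}{m}f_{m+j}=\theta^{(m)}\bigl(\widetilde{f}_j^{\,2}\bigr)-\binom{m+j}{m}\widetilde{f}_{m+j}^{\,2}.$$
Thus only the square parts $\widetilde{f}$ contribute, which is exactly where the parity of the indices becomes decisive.

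Next I dispose of the ``other'' cases by a four-way parity split, using the characteristic-$2$ rules $\theta^{(m)}(y^2)=0$ for odd $m$ and $\theta^{(2m')}(y^2)=(\theta^{(m')}(y))^2$, the vanishing $\widetilde{f}_i=0$ for odd $i$, and Lucas' congruence. If $m$ is odd then $\theta^{(m)}(\widetilde{f}_j^{\,2})=0$; moreover if $j$ is even then $m+j$ is odd so $\widetilde{f}_{m+j}=0$, while if $j$ is odd then $m+j$ is even and Lucas forces $\binom{m+j}{m}\equiv 0\pmod 2$ (the least-significant binary digits give a factor $\binom{0}{1}$), so $\binom{m+j}{m}\widetilde{f}_{m+j}^{\,2}=0$. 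If $m$ is even and $j$ odd, then $\widetilde{f}_j=0$ and $m+j$ is odd, so both terms vanish. Hence the defect is $0$ unless $m=2m'$ and $j=2j'$ are both even and positive. In that surviving case, using $\binom{2m'+2j'}{2m'}\equiv\binom{m'+j'}{m'}\pmod 2$, that this binomial is $0$ or $1$ in $L$, and $a^2-b^2=(a-b)^2$, I obtain with $w_k:=\widetilde{f}_{2k}$
$$\theta^{(m)}(f_j)-\binom{m+j}{m}f_{m+j}=\Bigl(\theta^{(m')}(w_{j'})-\binom{m'+j'}{m'}w_{m'+j'}\Bigr)^{\!2},$$
so the even/even defect is the square of a defect of the same shape but with indices halved, exactly matching the appearance of $(m+j)/2$ and $\binom{(m+j)/2}{m/2}$ in the statement.

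It remains to show this square is a polynomial in $\binom{(m+j)/2}{m/2}f_{(m+j)/2},f_{(m+j)/2-1},\dots,f_1$. The organizing tool is the master identity, valid because $\theta_U[[T]]\circ\theta_T=\theta_{T+U}$ and $\theta_U$ commutes with the $C$-morphism $\ominus$,
$$\theta_U[[T]](f(T))=x\text{-coord}\bigl(\ominus(f(U),g(U))\oplus(f(T+U),g(T+U))\bigr),$$
which identifies $\sum_{m,j}\bigl(\theta^{(m)}(f_j)-\binom{m+j}{m}f_{m+j}\bigr)U^mT^j$ with the change in $x$-coordinate produced by adding to $(f(T+U),g(T+U))$ the point $\ominus(f(U),g(U))$, which tends to the origin $(0,0)$ as $U\to 0$. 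I would set up a simultaneous induction on $m+j$ for the defects of $f$, $g$ and $h$, where $h(T)=\bigl(\theta(z)-\tfrac{z}{1+z}\bigr)\big/\bigl(\theta(x)-\tfrac{x}{1+z}\bigr)$ and $w_k=h_k$, exploiting the relations $g=\tfrac{x}{1+z}h+h^3+\theta(z)$ and $f^3=g^2+g$ (the latter already expressing each $g_m$ as a polynomial in $f_1,\dots,f_{m-2}$). The inductive step expands $\theta^{(m')}(w_{j'})$ through the slope relation and eliminates all $z$-coordinate data using the curve equation $x^3=z^2+z$, whose $\theta$-derivatives tie the $\theta^{(i)}(z)$ to the $\xi_i$ (for instance applying $\theta^{(1)}$ and using $\theta^{(1)}(\square^2)=0$ gives $\theta^{(1)}(z)=x^2\xi_1$), thereby collapsing the expression to the $f_i$ with $i\le (m+j)/2$.

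I expect Step three to be the main obstacle: the first two steps are purely formal and use only the iteration rule, the Frobenius rules in characteristic $2$, and Lucas, but showing the genuine \emph{collapse to the $f_i$ alone} — with the correct index bound $(m+j)/2$ and the top coefficient $\binom{(m+j)/2}{m/2}$ — requires careful bookkeeping of the rational function $h$ (in particular its denominator $f-\theta(x)$) and systematic elimination of the $z$-derivatives via the curve relation inside the induction. This is the only place where the specific geometry of the elliptic curve, rather than mere formal properties of the iterative derivation, is used.
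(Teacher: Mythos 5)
Your Steps 1 and 2 are correct and give a clean derivation of the ``difference is $0$'' half of the lemma for $m,j\ge 1$: cancelling the $\xi$-contributions via the iteration rule and then running the parity/Lucas argument on $\theta^{(m)}(\widetilde f_j^{\,2})-\binom{m+j}{m}\widetilde f_{m+j}^{\,2}$ is sound, and is in fact more transparent than the paper's ``modulo squares'' remark. The halving identity in Step 3 is also a correct identity. The genuine gap is that the even/even case --- which is the entire content of the lemma --- is never proved. You reduce it to the claim that $\theta^{(m')}(h_{j'})-\binom{m'+j'}{m'}h_{m'+j'}$ is a polynomial in $\binom{m'+j'}{m'}f_{m'+j'},f_{m'+j'-1},\dots,f_1$, announce a ``simultaneous induction on $m+j$ for the defects of $f$, $g$ and $h$'', and then explicitly defer the bookkeeping. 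Nothing in the proposal establishes either of the two quantitative assertions that matter: (i) that no $f_i$ with $i>(m+j)/2$ enters, and (ii) that $f_{(m+j)/2}$ enters only through the product $\binom{(m+j)/2}{m/2}f_{(m+j)/2}$. Point (ii) is the delicate one: $h_{m'+j'}$ itself contains $\xi_{(m+j)/2}$ and $\theta^{((m+j)/2)}(z)$, and $\theta^{(m')}(h_{j'})$ involves derivatives of the denominator $\theta(x)-\tfrac{x}{1+z}$, so one must track exactly how the top-index data recombine after eliminating the $z$-derivatives via the curve equation --- and that elimination is precisely the step you leave undone.

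For comparison, the paper sidesteps any induction: it expands the single two-variable identity $\theta_U[[T]](f(T))=f(U)+\tfrac{f(T+U)}{1+g(T+U)}+(\cdots)^2$ and reads off the $U^mT^j$-coefficient of each term by regarding the nonlinear pieces as power series in $(T+U)$ with coefficients in $L((U))$. The bound $i\le (m+j)/2$ then comes from the fact that the offending terms are squares (so only even powers of $(T+U)$ occur), and the factor $\binom{(m+j)/2}{m/2}$ falls out of expanding $(T+U)^n$. To complete your route you would essentially have to reproduce this computation for the slope $h$; as written, the central claim of the lemma is asserted rather than proved.
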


\begin{proof}
By definition, $f(T)$ is the $x$-coordinate of $\eta_L\ominus\theta_*(\eta_L)$, hence
$\theta_U[[T]](f(T))$ is the $x$-coordinate of $(\theta_{U}[[T]])_* (\eta_L\ominus\theta_*(\eta_L))$.
But
\begin{eqnarray*}
(\theta_{U}[[T]])_*(\eta_L\ominus\theta_*(\eta_L)) &=& \theta_{U,*}(\eta_L)\ominus
(\theta_{U}[[T]])_*(\theta_*(\eta_L)) \\
&=& \theta_{U,*}(\eta_L)\ominus \eta_L \oplus \eta_L\ominus\theta_{U+T,*}(\eta_L)\\
&=& \bigl(\eta_L\ominus\theta_{U+T,*}(\eta_L) \bigr)\ominus \bigl(\eta_L
\ominus \theta_{U,*}(\eta_L)\bigr)
\end{eqnarray*}
Hence, $(\theta_U[[T]](f(T)),\theta_U[[T]](g(T)))=(f(U+T),g(U+T))\ominus (f(U),g(U))$.
Using the formula for the difference, we obtain
$$\theta_U[[T]](f(T))=f(U)+\frac{f(T+U)}{1+g(T+U)} +\left(
\frac{g(U)-\frac{g(T+U)}{1+g(T+U)}}{f(U)-\frac{f(T+U)}{1+g(T+U)}} \right)^2\in L[[T,U]].$$
The coefficient of $U^mT^j$ on the left hand side is $\theta^{(m)}(f_j)$. 
For the right hand side, we first remark that
$$\frac{f(T+U)}{1+g(T+U)}=f(T+U)+\bigl( g(T+U)/(T+U)\bigr)^2 \cdot \bigl( f(T+U)/(T+U)\bigr)^{-2},$$
as power series in $(T+U)$. So the right hand side is $f(U)+f(T+U)$ modulo squares.
This already shows that the coefficient of $U^mT^j$ on the right hand
side is $\binom{m+j}{m}f_{m+j}$, if $m$ or $j$ are odd.

For the other coefficients one has to have a closer look at the equation. Therefore, we consider
the remaining terms as power series in $(T+U)$ with coefficients in $L((U))$. The coefficient
of $U^mT^j$ in $\bigl( g(T+U)/(T+U)\bigr)^2 \cdot \bigl( f(T+U)/(T+U)\bigr)^{-2}$ is
$\binom{m+j}{m}$ times the coefficient of $(T+U)^{m+j}$ in this expression. Since $\bigl(
g(T+U)/(T+U)\bigr)$ is a multiple of $(T+U)^2$, this coefficient 
depends only on $f_{(m+j)/2-2}, f_{(m+j)/2-3},\dots, f_1$.
The last term in the equality above is the square of
\begin{eqnarray*}
\frac{g(U)-\frac{g(T+U)}{1+g(T+U)}}{f(U)-\frac{f(T+U)}{1+g(T+U)}} 
&=& \frac{1}{1+g(U)}\cdot \frac{g(U)+g(U)^2+(g(U)^2-1)g(T+U)}{f(U)+f(U)g(T+U)-f(T+U)}\\
&=&\frac{1}{1+g(U)}\cdot \frac{f(U)^3+(g(U)^2-1)g(T+U)}{f(U)+f(U)g(T+U)-f(T+U)}\\
&=& \frac{1}{1+g(U)} f(U)^2\cdot \frac{ 1+ \sum_{k=1}^\infty g_k(\frac{g(U)^2-1}{f(U)^3})
(T+U)^k }{ 1+ \sum_{k=1}^\infty (g_k -\frac{f_k}{f(U)})(T+U)^k }\\
&=& \bigl(1+g(U)\bigr)^{-1} f(U)^2\cdot \left(\sum_{n=0}^\infty \tau_n (T+U)^n \right)
\end{eqnarray*}
where $\tau_n$ is some polynomial in $f_1,\dots, f_n$ (and $g_1,\dots, g_n$), $g(U)$ and
$\frac{1}{f(U)}$.
Since the whole expression is a power series, $f(U)^2\cdot \tau_n$ is already in $L[[U]]$. Hence,
the coefficient of $U^mT^j$ in $\left( \frac{1}{1+g(U)} f(U)^2\cdot (\sum_{n=0}^\infty \tau_n
(T+U)^n )\right)^2$ depends only on $f_{(m+j)/2}, f_{(m+j)/2-1},\dots, f_1$, and $f_{(m+j)/2}$ only
occurs with the factor $\binom{(m+j)/2}{m/2}$.
\end{proof}

\begin{thm}\label{thm:strong formula}
$\theta$ is an iterative derivation on $L$ commuting with $\rho$ if and only if for all $\ell\geq
0$ and all $0<m<2^\ell$  one has $\xi_{m+2^\ell}=\theta^{(2^\ell)}(\xi_m)$ and
\begin{equation*}\label{eqn:formel}
\xi_{2^\ell}\in \sum_{m=1}^{2^\ell-1} \theta^{(2^\ell)}(\xi_m)t^m
+ \left( \sum_{m=0}^{2^\ell-1} \theta^{(m)}(\widetilde{f}_{2^\ell})t^m\right)^2 +
C(t^{2^{\ell+1}}). \tag{$*_\ell$} 
\end{equation*}
In particular, it is possible to choose/calculate elements $\xi_m$ recursively for
$m=1,2,\dots $ in order to obtain an iterative derivation on $L$ commuting with $\rho$.
\end{thm}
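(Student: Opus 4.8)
The plan is to characterise the abstract condition through Theorem~\ref{thm:commuting with rho} and then to decode the resulting power-series membership into the recursion $(*_\ell)$. By Theorem~\ref{thm:commuting with rho}, ``$\theta$ is an ID and $\rho$ is an ID-homomorphism'' is equivalent to $Q:=\theta_{U,*}(\eta_L)\ominus\theta_{T+U,*}(\eta_L)\in A(R)$. For an \emph{arbitrary} higher derivation the group law already gives $Q=(f(T+U),g(T+U))\ominus(f(U),g(U))$ (no iterativity is needed for this identity), so its $x$-coordinate is the explicit series $F(T,U)$ computed in the proof of Lemma~\ref{lem:theta-f}. First I would reduce to controlling $F$ alone: since $Q$ lies on $E$ its coordinates satisfy $G^2+G=F^3$, and the unique solution in the augmentation ideal is $G=\sum_{i\ge 0}(F^{3})^{2^{i}}$; as $R\subseteq C(t)[[T,U]]$ is a subring closed under squaring and under $(T,U)$-adic limits, $F\in R$ already forces $G\in R$. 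Hence ``$\theta$ is an ID and $\rho$ commutes'' is equivalent to $F\in R$.

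Next I would decode $F\in R$. Setting $U=0$ gives $Q|_{U=0}=(f(T),g(T))\ominus O=(f(T),g(T))$, so $F(T,0)=f(T)$ and $F\in R$ forces $f_j\in C(t)$ for all $j$, which by Lemma~\ref{lem:rho-HD-homo} is exactly ``$\rho$ is an HD-homomorphism''. Conversely, once all $f_j\in C(t)$, the only remaining content of the $R$-condition is that $F(t;T,U)$ equals the shift $\sum_j f_j(t+U)T^j=\theta_U[[T]](f(T))$; reading the computation in Lemma~\ref{lem:theta-f} backwards, this symmetry is precisely the identity $\theta_{T+U}=\theta_U[[T]]\circ\theta_T$ on the generic point, i.e.\ iterativity of $\theta$. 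Thus $F\in R$ is equivalent to the pair ``$\theta$ is an ID'' and ``$f_j\in C(t)$ for all $j$'', and it remains to match this pair with $(*_\ell)$ together with $\xi_{m+2^\ell}=\theta^{(2^\ell)}(\xi_m)$.

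For the iterativity half I would invoke Theorem~\ref{thm:equivalent conditions} with $F=C(t)$, $L=C(x,z,t)$ and the separating basis $x_1=x$. Its hypothesis $\xi_m\in L^2C(t)$ is supplied by $f_m=\xi_m+(\widetilde{f}_m)^2$: once $f_m\in C(t)$ one has $\xi_m\in C(t)+L^2\subseteq L^2C(t)$. Specialised to $p=2$, condition~(4) of that theorem reads exactly $\xi_{m+2^\ell}=\theta^{(2^\ell)}(\xi_m)$ for $0<m<2^\ell$ (the first assertion of the present theorem) together with $\xi_{2^\ell}+\sum_{m=1}^{2^\ell-1}\theta^{(2^\ell)}(\xi_m)t^m\in\bigcap_{0\le j\le\ell}\Ker(\theta^{(2^j)})$. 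So the shared recursion is already accounted for, and the whole problem reduces to showing that, in the presence of iterativity, the extra requirement ``$f_j\in C(t)$ for all $j$'' sharpens this last membership into the explicit coset of $(*_\ell)$. To avoid circularity (Theorem~\ref{thm:equivalent conditions} needs $\xi_m\in L^2C(t)$, which itself comes from $f_j\in C(t)$) I would run everything as a single induction on $\ell$, using Theorem~\ref{thm:equivalent conditions} with parameter $\ell_0=\ell$ so that only orders $j+m\le 2^{\ell+1}$ are involved and the needed facts are available from the previous levels.

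This sharpening is the main obstacle and the genuinely curve-specific step. Assuming the conditions through level $\ell-1$ — so that $\theta$ is iterative up to order $2^\ell$, Lemma~\ref{lem:theta-f} applies, and $f_j\in C(t)$ for $j<2^\ell$ — I would substitute $\xi_m=f_m+(\widetilde{f}_m)^2$ into $\zeta_\ell:=\xi_{2^\ell}+\sum_{m=1}^{2^\ell-1}\theta^{(2^\ell)}(\xi_m)t^m$ and use Lemma~\ref{lem:theta-f} to replace each $\theta^{(2^\ell)}(f_m)$ by $f_{m+2^\ell}$ plus explicitly controlled lower-order corrections. The element $\widetilde{f}_{2^\ell}$ enters through $\theta^{(2^\ell)}((\widetilde{f}_m)^2)=(\theta^{(2^{\ell-1})}(\widetilde{f}_m))^2$, and the block $v:=\sum_{m=0}^{2^\ell-1}\theta^{(m)}(\widetilde{f}_{2^\ell})t^m$ is, by Lemma~\ref{lem:Things to show}(2), annihilated by $\theta^{(2^j)}$ for $j<\ell$; hence $v^2\in\bigcap_{0\le j\le\ell}\Ker(\theta^{(2^j)})$, using $\theta^{(2^j)}(v^2)=(\theta^{(2^{j-1})}(v))^2$ for $j\ge 1$ and $\theta^{(1)}(v^2)=0$, and so does $C(t^{2^{\ell+1}})$ by Lucas' congruence. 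The computation should then collapse the $2^\ell$ conditions ``$f_j\in C(t)$'' for $2^\ell\le j<2^{\ell+1}$ — which under the recursion are all governed by the single free parameter $\xi_{2^\ell}$ — into the statement that $\zeta_\ell$ differs from $v^2$ by an element of $C(t^{2^{\ell+1}})$, i.e.\ into $(*_\ell)$. Tracking the lower-order corrections from Lemma~\ref{lem:theta-f} and checking that the two cosets actually coincide (not merely that one contains the other) is where the real work lies; the case $\ell=0$, where $v=0$ and $(*_0)$ reduces to $\xi_1\in C(t^{2})$, is a useful consistency check. Finally the ``in particular'' is then immediate: at each stage $(*_\ell)$ prescribes $\xi_{2^\ell}$ up to the subspace $C(t^{2^{\ell+1}})\subseteq L^2C(t)$ in terms of already-chosen data, and $\xi_{m+2^\ell}=\theta^{(2^\ell)}(\xi_m)$ determines the remaining coefficients up to index $2^{\ell+1}-1$, so the recursion is consistently solvable.
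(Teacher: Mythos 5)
Your overall skeleton is the right one and largely matches the paper's: split the condition into ``$\theta$ is iterative'' plus ``$f_j\in C(t)$ for all $j$'' (the latter being Lemma~\ref{lem:rho-HD-homo}), handle iterativity through Theorem~\ref{thm:equivalent conditions} with $p=2$, run a single induction on $\ell$ to avoid the circularity you correctly identify, and use Lemma~\ref{lem:theta-f} to propagate $f_j\in C(t)$ across one dyadic block. Your detour through Theorem~\ref{thm:commuting with rho} and the observation that $Q=(f(T+U),g(T+U))\ominus(f(U),g(U))$ holds for an arbitrary higher derivation is a legitimate repackaging (though your claim that equality of the $x$-coordinates alone recovers iterativity needs a word, since an $x$-coordinate only determines a point up to inversion).

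The genuine gap is the step you yourself flag as ``where the real work lies'': the passage from the pair (iterativity, $f_j\in C(t)$ for $2^\ell\le j<2^{\ell+1}$) to the explicit coset $(*_\ell)$. Your plan --- substitute $\xi_m=f_m+(\widetilde{f}_m)^2$ into $\zeta_\ell$ and track the lower-order corrections coming from Lemma~\ref{lem:theta-f} until the two cosets are seen to coincide --- is not carried out, and it is not a routine verification; as stated it would require an exact two-sided bookkeeping of all the correction polynomials of Lemma~\ref{lem:theta-f}, which that lemma does not make explicit. The paper sidesteps this entirely with one identity that your proposal never produces: using the iteration rule at the orders already available, $\xi_{2^\ell}+\sum_{m=1}^{2^\ell-1}\theta^{(2^\ell)}(\xi_m)t^m=\sum_{m=0}^{2^{\ell+1}-1}\theta^{(m)}(\xi_{2^\ell})t^m$ and $\bigl(\sum_{m=0}^{2^\ell-1}\theta^{(m)}(\widetilde{f}_{2^\ell})t^m\bigr)^2=\sum_{m=0}^{2^{\ell+1}-1}\theta^{(m)}\bigl((\widetilde{f}_{2^\ell})^2\bigr)t^m$, so the left-hand side of $(*_\ell)$ minus the coset equals $\sum_{m=0}^{2^{\ell+1}-1}\theta^{(m)}(f_{2^\ell})t^m$. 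This element lies in $\bigcap_{0\le j\le\ell}\Ker(\theta^{(2^j)})$ by Lemma~\ref{lem:Things to show}, hence in $C(t^{2^{\ell+1}})$ as soon as $f_{2^\ell}\in C(t)$, which gives the forward implication; conversely $(*_\ell)$ says exactly that this truncated Taylor expansion of $f_{2^\ell}$ lies in $C(t)$, and then Lemma~\ref{lem:theta-f} plus induction shows every summand with $m>0$ is already in $C(t)$, forcing $f_{2^\ell}\in C(t)$. Without this identity (or an equivalent closed-form computation), your argument establishes only that $(*_\ell)$ is a plausible normal form, not that it is equivalent to the stated condition, so the proof is incomplete at its central point.
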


\begin{proof}
First let $\theta$ be an iterative derivation which commutes with $\rho$. Then
$\xi_{m+2^\ell}=\theta^{(m)}(\xi_{2^\ell})$ for all $0<m<2^\ell$ by Theorem \ref{thm:equivalent
conditions}.\\
Further using the rules in Theorem \ref{thm:equivalent
conditions}, we obtain:
\begin{eqnarray*}
\xi_{2^\ell}+ \sum_{m=1}^{2^\ell-1} \theta^{(2^\ell)}(\xi_m)t^m 
&=& \sum_{m=1}^{2^\ell-1} \theta^{(m)}(\xi_{2^\ell})t^m
= \sum_{m=1}^{2^{\ell+1}-1} \theta^{(m)}(\xi_{2^\ell})t^m,
\end{eqnarray*}
since $\theta^{(m)}(\xi_{2^\ell})=0$ for $2^\ell\leq m<2^{\ell+1}$, as well as
\begin{eqnarray*}
\left( \sum_{m=0}^{2^\ell-1} \theta^{(m)}(\widetilde{f}_{2^\ell})t^m\right)^2
&=& \sum_{m=0}^{2^\ell-1} \left(\theta^{(m)}(\widetilde{f}_{2^\ell})\right)^2 t^{2m}
= \sum_{m=0}^{2^\ell-1} \theta^{(2m)}\bigl((\widetilde{f}_{2^\ell})^2\bigr) t^{2m}\\
&=& \sum_{m=0}^{2^{\ell+1}-1} \theta^{(m)}\bigl((\widetilde{f}_{2^\ell})^2\bigr) t^{m},
\end{eqnarray*}
since $\theta^{(m)}\bigl((\widetilde{f}_{2^\ell})^2\bigr)=0$ for $m$ odd. Combining these we get:
\begin{eqnarray*}
&&\xi_{2^\ell}+ \sum_{m=1}^{2^\ell-1} \theta^{(2^\ell)}(\xi_m)t^m +\left( \sum_{m=0}^{2^\ell-1}
\theta^{(m)}(\widetilde{f}_{2^\ell})t^m\right)^2\\
&&\qquad = \sum_{m=0}^{2^{\ell+1}-1} \theta^{(m)}(\xi_{2^\ell})t^m+\sum_{m=0}^{2^{\ell+1}-1}
\theta^{(m)}\bigl((\widetilde{f}_{2^\ell})^2\bigr) t^{m} 
 = \sum_{m=0}^{2^{\ell+1}-1} \theta^{(m)}(f_{2^\ell})t^m
\end{eqnarray*}
This expression is in $C(t)$, since $f_{2^\ell}\in C(t)$ by Lemma \ref{lem:rho-HD-homo}, and it is
in the intersection
$\bigcap_{0\leq j<\ell+1} \Ker(\theta^{(p^j)})$ by Lemma \ref{lem:Things to show}, hence in
$C(t^{2^{\ell+1}})$ as desired.

On the other hand, assume that the conditions on $\xi_{m+2^\ell}$ and on $\xi_{2^\ell}$ hold.
We will first show that $\theta$ is an iterative derivation by showing inductively that $\theta^{(j)}\circ
\theta^{(m)}=\binom{j+m}{j}\theta^{(j+m)}$ for all $j+m\leq 2^{\ell_0+1}$.

For $\ell_0=0$, condition ($*_{\ell_0}$) is just $\xi_1\in C(t^2)$, which implies
$\theta^{(1)}(\xi_1)=0$. Hence by Theorem \ref{thm:equivalent conditions}, the iteration rule holds
for all $j+m\leq 2=2^{0+1}$. 
Now, assume by induction that the iteration rule holds for all $j+m\leq 2^{\ell_0}$. Then it even
holds for all $j+m<2^{\ell_0+1}$, since $\xi_{m+2^\ell}=\theta^{(2^\ell)}(\xi_m)$, and we obtain by
Lemma \ref{lem:Things to show} that $\theta^{(2^j)}\left(\sum_{m=0}^{2^{\ell_0}-1}
\theta^{(m)}(x)t^m\right)=0$ for all $x\in L$ and $0\leq j<\ell_0$, in particular 
$\theta^{(2^j)}\left(\sum_{m=0}^{2^{\ell_0}-1}
\theta^{(m)}(\widetilde{f}_{2^{\ell_0}})t^m\right)=0$ for $0\leq j<\ell_0$.
Therefore using ($*_{\ell_0}$), $\xi_{2^{\ell_0}}+ \sum_{m=1}^{2^{\ell_0}-1}
\theta^{(2^{\ell_0})}(\xi_m)t^m\in \bigcap_{0\leq j\leq \ell_0} \Ker(\theta^{(2^j)})$.
By Theorem \ref{thm:equivalent conditions}, this shows that the iteration rule holds for $j+m\leq
2^{\ell_0+1}$.

It remains to show that $\rho$ is an ID-homomorphism. By Lemma \ref{lem:rho-HD-homo}, this is
equivalent to $f_k\in C(t)$ for all $k\geq 1$. Again we use induction: The case $k=1$ is given
by condition ($*_0$), since $f_1=\xi_1$.
Assume $f_m\in C(t)$ is already shown for $1\leq m \leq 2^\ell-1$.
If $k=2^\ell+m$ for some $0<m<2^\ell$, then by Lemma
\ref{lem:theta-f}, $f_{2^\ell+m}$ differs from $\theta^{(2^\ell)}(f_m)$ by a polynomial in
$f_j$ for $1\leq j\leq 2^\ell-1$, and hence is an element of $C(t)$ by induction.
If $k=2^\ell$, condition $(*_\ell)$ and the calculations above imply that
\begin{equation*}\sum_{m=0}^{2^{\ell+1}-1} \theta^{(m)}(f_{2^\ell})t^m\in C(t). \tag{$\dagger$}
\end{equation*}

By using Lemma \ref{lem:theta-f}, and $f_{2^\ell+m}\in C(t)$ as well as $f_j\in C(t)$ for $1\leq j\leq 2^\ell-1$, we see that $\theta^{(m)}(f_{2^\ell})$ is an
element of $C(t)$ for $0<m<2^\ell$, and also $\theta^{(2^\ell)}(f_{2^\ell})\in C(t)$, since $\binom{2^{\ell+1}}{2^\ell}$ and
$\binom{2^{\ell}}{2^{\ell-1}}$ are both zero in characteristic $2$. For $2^\ell<m<2^{\ell+1}$, we have
$\theta^{(m)}(f_{2^\ell})=\theta^{(m-2^\ell)}\left(\theta^{(2^\ell)}(f_{2^\ell})\right)\in C(t)$.
Therefore all the terms in ($\dagger$) different from $f_{2^\ell}$ are in $C(t)$ and hence
$f_{2^\ell}\in C(t)$.
\end{proof}

\comment{

} 

\bibliographystyle{plain}


\begin{thebibliography}{1}

\bibitem{td:tipdgtfrz}
Tobias Dyckerhoff.
\newblock The inverse problem of differential {G}alois theory over the field
  {R}(z).
\newblock Preprint, 2008.

\bibitem{am:gticngg}
Andreas Maurischat.
\newblock Galois theory for iterative connections and nonreduced {G}alois
  groups.
\newblock {\em Trans. Amer. Math. Soc.}, 362(10):5411--5453, 2010.

\bibitem{am:igsidgg}
Andreas Maurischat.
\newblock Infinitesimal group schemes as iterative differential {G}alois
  groups.
\newblock {\em J. Pure Appl. Algebra}, 214(11):2092--2100, 2010.

\bibitem{ar:icac}
Andreas R{\"o}scheisen.
\newblock {\em Iterative Connections and Abhyankar's Conjecture}.
\newblock PhD thesis, Heidelberg University, Heidelberg, Germany, 2007.

\end{thebibliography}

\vspace*{.5cm}

\parindent0cm

\end{document}